\documentclass[11pt]{article}

\usepackage{amsmath,amsthm,verbatim,amssymb,amsfonts,amscd, graphicx, mathrsfs, mathtools, hyperref, cleveref, dsfont, nomencl, xcolor}
\usepackage{graphics}
\usepackage{tikz}
\usepackage{tikz-cd}
\usetikzlibrary{matrix,arrows,decorations.pathmorphing}
\usepackage[thicklines]{easytable}
\topmargin0.0cm
\headheight0.0cm
\headsep0.0cm
\oddsidemargin0.0cm
\textheight23.0cm
\textwidth16.5cm
\footskip1.0cm

\numberwithin{equation}{section}

\theoremstyle{plain}
\newtheorem{theorem}{Theorem}[section]

\newtheorem{lemma}[theorem]{Lemma}
\newtheorem{proposition}[theorem]{Proposition}

\theoremstyle{definition}
\newtheorem{definition}[theorem]{Definition}
\newtheorem{remark}{Remark}[section]

\hypersetup{
    colorlinks=true, 
    linkcolor=blue, 
    urlcolor=red, 
    linktoc=all 
    }

\newcommand{\1}{\mathds{1}}
\newcommand{\C}{\mathcal{C}}

\newcommand {\CC}{\mathbb{C}}
\newcommand{\Rep}{\text{Rep}}
\newcommand{\F}{\mathcal{F}}
\newcommand{\ind}{\mathscr{F}}

\newcommand{\ZZ}{\mathbb{Z}} 
\newcommand{\CCC}{\mathcal{C}} 
\newcommand{\dCCC}{\mathcal{C}_\oplus} 
\DeclareMathOperator{\ob}{\text{Ob}} 
\DeclareMathOperator{\im}{im} 
\DeclareMathOperator{\Id}{Id} 
\DeclareMathOperator{\Span}{Span} 
\DeclareMathOperator{\Hom}{Hom} 
\DeclareMathOperator{\inj}{inj} 

\setlength\parindent{0pt}

\newlength{\domainlength}
\newlength{\codomainlength}
\newlength{\typicaleltlength}
\newlength{\imagelength}


\DeclareMathOperator{\shift}{Shift} 



\title{On Infinite Order Simple Current Extensions \\ of Vertex Operator Algebras}
\author{Jean Auger, Matt Rupert}
\date{}
\begin{document}
\maketitle

\begin{abstract} 
We construct a direct sum completion $\dCCC$ of a given braided monoidal category $\C$ which allows for the rigorous treatment of infinite order simple current extensions of vertex operator algebras as seen in \cite{CKL}. As an example, we construct the vertex operator algebra $V_L$ associated to an even lattice $L$ as an infinite order simple current extension of the Heisenberg VOA and recover the structure of its module category through categorical considerations.
\end{abstract}

\tableofcontents
\bigskip

\section{Introduction} \label{intro}

Many interesting vertex operator algebras (VOAs) such as the $\mathcal{B}_p$-algebras (see \cite{C,CRW,ACKR}) motivated by a study of $L_{k}(\mathfrak{sl}_2)$ \cite{A}, the logarithmic parafermion algebras of \cite{ACR}, and $L_{-1}\big(\mathfrak{sl}(m|n)\big)$ of \cite{CKLR,KW}, among others, can be realised as infinite order simple current extensions. The triplet VOA $\mathcal{W}(p)$ may also be realized as an infinite order simple current extension of the singlet VOA $\mathcal{M}(p)$ and a parallel construction may also be applied to Hopf algebras, as seen in \cite{CGR}.  Simple current extensions have also appeared in the study of conformal embeddings \cite{AKMPP,KMPX}. A simple current (see Definition \ref{simplecurrent}) $J$ in the module category $\C$ of a VOA $V$ satisfying certain conditions (see \cite[Theorem 3.12]{CKL}) can be used to construct a VOA extension:

\begin{equation} \label{simplecurrentext}
V_e = \bigoplus\limits_{n \in G} J^{\otimes n}
\end{equation}
where $G = \ZZ / n\ZZ$ when $n=\text{ord}(J)$ is finite and $G=\ZZ$ when ord($J)=\infty$, and $J^0 = V = \mathds{1}_\C$. The extended VOA $V_e$ is an algebra object in $\C$ called a \textit{simple current extension} of $V$. The theory of algebra objects was developed in \cite{KO} with its applications to VOAs indicated therein. These applications were made rigorous in \cite{HKL}. It was shown in \cite{HKL} that if $\C$ is a vertex tensor category (see \cite{HLZ1}-\cite{HLZ8}), then the existence of the extension $V_e$ is equivalent to the existence of a \textit{haploid algebra object} in $\C$. It was also shown in \cite{HKL} that when $\C$ is a vertex tensor category, the category of generalised modules $\text{Mod}^GV_e$ of the extension VOA $V_e$ is equivalent as an abelian category to a category of modules $\Rep^0 V_e$ where $V_e$ is seen as an algebra object in $\C$. This was generalised to superalgebra objects in \cite{CKL}, and the equivalence was shown to be a braided monoidal equivalence in \cite{CKM}. It is possible to determine much of the categorical structure of $\text{Rep}^0V_e$ from $\C$ through the techniques provided in \cite{CKM}. Other notable results on simple current extensions can be found in \cite{DLM,FRS,La,LaLaY,Y}. \\

When $J$ is an infinite order simple current ($\#G = \infty$), the object $V_e$ is an infinite direct sum and therefore is not in general an object of $\C$. This poses a problem because the theorems of \cite{CKL} assume that $V_e$ is an algebra object of $\C$. One solution to this problem is to introduce a second grading on the modules as in \cite{HLZ1}-\cite{HLZ8}. Here, however, we appeal to a certain completion of $\C$ which allows for infinite direct sums. A well known candidate for such a completion is the Ind-category Ind($\C$) constructed in \cite{AGV}. The category Ind($\C$) is a natural completion of $\C$ under general \textit{inductive limits} and is both larger and more sophisticated than we require for the study of infinite order extensions. Since the machinery of the Ind-category is quite abstract, we prefer to focus on a direct sum completion $\dCCC$ of $\C$ rather than the full $\text{Ind}(\C)$.  \\

The purpose of this paper is two-fold. Firstly, we construct a direct sum completion $\dCCC$ of $\C$ 
that complements \cite{CKL} for the rigorous study of infinite order simple current extensions. Secondly, we illustrate the power of \cite{CKM} in the simplest possible example, realising the VOA $V_L$ associated to a rank 1 even lattice $L$ as a simple current extension of the Heisenberg VOA $\mathcal{H}$. The techniques of \cite{CKM} allow us to determine the structure of $\Rep^0 V_L$ and show that it coincides with the structure of Mod$V_L$ found in \cite{D,DL,LL}. Recent work on Drinfeld categories found in \cite{DF} can be applied to describe Mod$V_L$ through algebra objects, however our approach is more applicable to infinite order simple current extensions in general. The present work also paves the way for the study of richer examples of infinite order simple current VOA extensions such as the aforementioned $\mathcal{B}_p$-algebras, logarithmic parafermion algebras, and $L_{-1}\big(\mathfrak{sl}(m|n)\big)$.  \\

After a brief review of pertinent categorical notions in Section~\ref{sec:background}, we detail the completion category $\dCCC$ in Section \ref{sec:category}. We finish by analysing the infinite order simple current extension $\mathcal{H} \subset V_L$ in Section~\ref{sec:application} using the methods of \cite{CKM} within the framework of Section~\ref{sec:category}.

\section{Background} \label{sec:background}

In this section we will recall some of the fundamental concepts to which we will refer in subsequent sections including categories, algebra objects within categories, and simple currents.

\subsection{Category theory}

Recall that a category $\C$ is a class of objects $\ob(\C)$ and a class of morphisms $\Hom_{\C}(U,V)$ for each pair $U,V \in \text{Ob}(\C)$ together with an associative binary operation called composition 
$$ \circ : \; \Hom_{\C}(V,W) \times \Hom_{\C}(U,V) \rightarrow \Hom_{\C}(U,W) \; , $$ 
for which each object $V \in \ob(\C)$ has an \textit{identity} morphism $\Id_V \in \Hom_\C(V,V)$ that preserve any morphisms they are composed with.

\begin{definition}
A category $\mathcal{C}$ is \textit{additive} if
\begin{itemize}
\item $\Hom_{\C}(U,V)$ is an abelian group for every pair of objects $U,V \in \ob(\C)$ and composition of morphisms is bi-additive,
\item $\C$ has a a zero object $0$ such that $\Hom_{\C}(0,0) = 0$ is the trivial abelian group,
\item $\C$ contains finite direct sums (finite coproducts). That is, for every pair of objects $V_1,V_2 \in \ob(\C)$, there exists $W = V_1 \oplus V_2 \in \ob(\C)$ and morphisms $p_1: W \rightarrow V_1$, $p_2 : W \rightarrow V_2$, $i_1 : V_1 \rightarrow W$, $i_2 : V_2 \rightarrow W$ such that $p_1 \circ i_1 = \Id_{V_1}$, $p_2 \circ i_2 = \Id_{V_2}$, and $i_1 \circ p_1+i_2 \circ p_2 = \Id_W$.
\end{itemize}

Given a field $\mathbb{F}$, an abelian category $\C$ is called \textit{$\mathbb{F}$-linear} if for each $U,V \in \ob(\C)$, $\Hom_\C(U,V)$ is a vector space over $\mathbb{F}$ and the composition is $\mathbb{F}$-bilinear.
\end{definition}

%
%
%
%

A \textit{tensor product} on a category is a bifunctor $\otimes: \C \times \C \rightarrow \C$ that commutes with finite direct sums.
%
%
An \textit{associativity constraint} $a$ on $\mathcal{C}$ is a family $\big\{ a_{U,V,W}:(U \otimes V) \otimes W \to U \otimes (V \otimes W) \big\}_{U,V,W \in \ob(\C)}$ of natural isomorphisms. An associativity constraint satisfies the \textit{pentagon axiom} if the diagram

\begin{center}
\begin{tikzcd}
\left( (U \otimes V) \otimes W \right) \otimes X  \arrow{dd}{a_{U,V \otimes W,X}} &\;& \left( (U \otimes V) \otimes W \right) \otimes X \arrow{ll}{a_{U,V,W} \otimes \Id_X} \arrow{dr}{a_{U \otimes V,W,X}} &\\
&\;&\;& (U \otimes V) \otimes (W \otimes X) \arrow{dl}{a_{U,V,W \otimes X}}\\
U \otimes \left( (V \otimes W) \otimes X \right) \arrow{rr}[swap]{\Id_U \otimes a_{V,W,X}} &\;& U \otimes \left(V \otimes (W \otimes X) \right) & \;
\end{tikzcd}
\end{center}
 commutes for every choice of objects $U,V,W,X \in \ob(\C)$. A \textit{left unit constraint} $l$ in $\mathcal{C}$ with respect to an object $\1 \in \text{Ob}(\C)$ is a family $\big\{ l_V: \mathds{1} \otimes V \to V \big\}_{V \in \ob(\C)}$ of natural isomorphisms. \textit{Right unit constraints} $\big\{r_V:V \otimes \mathds{1} \to V\big\}_{V \in \ob(\C)}$ are defined similarly. The associativity, left, and right constraints satisfy the \textit{triangle axiom} if the diagram
%
\begin{equation*}
\begin{tikzcd}
(U \otimes \1) \otimes V \arrow{rr}{a_{U,I,V}}\arrow{rd}[swap]{r_U \otimes \Id_V} &&U \otimes (\1 \otimes V) \arrow{dl}{\Id_U \otimes l_V}\\
& U \otimes V
\end{tikzcd}
\end{equation*}
commutes for every pair of objects $U,V \in \text{Ob}(\mathcal{C})$. A triple $(\1,l,r)$ is called a \textit{unit} in $\C$ if $l$ and $r$ are left and right unit constraints with respect to $\1$, respectively, that satisfy the triangle axiom.

\begin{definition}
A \textit{monoidal category} $(\C,\otimes,a,\mathds{1},r,l)$ is a category $\mathcal{C}$ equipped with a tensor product $\otimes$, associativity constraint $a$ satisfying the pentagon axiom, and a unit object $\mathds{1}$ with left and right unit constraints $l,r$ satisfying the triangle axiom. 
\end{definition}

A \textit{commutativity constraint} $c$ on $\C$ is a family $\big\{ c_{U,V}:U \otimes V \to V \otimes U\big\}_{U,V \in \ob(\C)}$ of natural isomorphisms. A \textit{braiding} is a commutativity constraint which also satisfies the \textit{hexagon axiom}, which is the commutativity of the diagram
%
\begin{center}
\begin{tikzcd}
\; &  U \otimes (V \otimes W) \arrow{rr}[swap]{c_{U, V \otimes W}}  &\;& (V \otimes W) \otimes U \arrow{dr}{a_{V,W,U}}& \; \\
  (U \otimes V) \otimes W\arrow{ur}[swap]{a_{U,V,W}} \arrow{dr}{c_{U,V} \otimes \Id_W} &\;&\;&\;& V \otimes (W \otimes U)\\ 
\; & (V \otimes U) \otimes W \arrow{rr}[swap]{a_{V,U,W}}  &\;&V \otimes (U \otimes W) \arrow{ur}[swap]{\Id_V \otimes c_{U,W}}&\;
\end{tikzcd}
\end{center}
and of the analagous diagram for $a^{-1}$.
\begin{definition}
A braided monoidal category is a monoidal category with a braiding $c$. 
\end{definition}

A twist $\theta$ in a braided monoidal category $\C$ is a family $\{\theta_V:V \rightarrow V\}_{V \in \ob(\C)}$ of natural isomorphisms such that the \textit{balancing axiom}
\[ \theta_{U \otimes V}=c_{V,U}\circ c_{U,V} \circ(\theta_U \otimes \theta_V) \; ,\]
holds. Let $V \in \ob(\C)$ and suppose there is an associated object $V^*$ with \textit{duality morphisms}

\[ \overrightarrow{\text{coev}}_V:\mathds{1} \rightarrow V \otimes V^*, \qquad \overrightarrow{\text{ev}}_V:V^* \otimes V \rightarrow \mathds{1} \; . \]
which satisfy the relations
\begin{align*}
r_V \circ (\Id_V \otimes \overrightarrow{\text{ev}}_V)\circ a_{V,V^*,V} \circ(\overrightarrow{\text{coev}}_V \otimes \Id_V) \circ l_V^{-1}&=\Id_V,\\
l_{V^*} \circ (\overrightarrow{\text{ev}}_V \otimes \Id_{V^*})\circ a^{-1}_{V^*,V,V^*} \circ (\Id_{V^*} \otimes \overrightarrow{\text{coev}}_V) \circ r_{V^*}^{-1}&= \Id_{V^*}.
\end{align*}
%
Then, $V^*$ is said to be \textit{left dual} to $V$. If a left dual exists for every $V \in \ob(\C)$ then $\mathcal{C}$ is called left rigid. The duality morphisms are said to be compatible with the braiding and twist if they satisfy the relation

\[ (\theta_V \otimes \Id_{V^*}) \circ \overrightarrow{\text{coev}}_V=(\Id_V \otimes \theta_{V^*}) \circ \overrightarrow{\text{coev}}_V \; . \]

\begin{definition}
A \textit{ribbon category} is a left rigid braided monoidal category with twist and compatible duality.
\end{definition}

\subsection{Simple currents and algebra objects}

Let $\mathbb{F}$ be a field and $\CCC$ be a $\mathbb{F}$-linear braided monoidal category with twist $\theta$. 

\begin{definition} \label{simplecurrent}
A simple current in $\CCC$ is a simple object $J$ which is invertible with respect to the tensor product. That is, there exists an object $J^{-1} \in \C$ satisfying $J \otimes J^{-1} \cong \mathds{1}$.
\end{definition}

\begin{remark}
When $\C$ is a module category for a simple vertex operator algebra $V$, the above definition for simple currents is equivalent to the duality morphisms being isomorphisms, which is Definition 2.11.1 for invertibility given in \cite{EGNO}.
\end{remark}

As noted in the introduction, it is natural to expect that a simple current extension $V \subset V_e$ as of \eqref{simplecurrentext} and its representation theory can be related to $V$ by categorical means. In fact, it can be shown (see \cite{CKM}) that the category of generalised modules of the VOA $V_e$ is braided equivalent to a category $\Rep^0 V_e$ defined below, where $V_e$ is seen as an \textit{algebra object} in the category $\C$ rather than a VOA.  These ideas also appeared in \cite{CKL,KO}. \\

For the rest of the subsection, we recall the key notions of algebra objects and of modules for algebra objects. 

\begin{definition} \label{algebraobjectdef}
An \textit{associative unital and commutative algebra} in the category $\C$ is a triple $(A,\mu,\iota_A)$ where $A \in \ob(\C)$, $\mu \in \Hom_\C(A \otimes A, A)$ and $\iota_A \in \Hom_\C(\mathds{1},A)$ are subject to the following assumptions:
\begin{itemize}
\item Associativity: $\mu \circ ( \Id_A \otimes \mu) = \mu \circ (\mu \otimes \Id_A) \circ a^{-1}_{A,A,A} : A \otimes (A \otimes A) \rightarrow A \;$;
\item Unit: $\mu \circ (\iota_A \otimes \Id_A) \circ l_A^{-1} = \Id_A : A \rightarrow A \;$; 
\item Commutativity: $\mu \circ c_{A,A} = \mu : A \otimes A \rightarrow A \;$. 
\end{itemize}
\end{definition}


\begin{definition} \label{RepAdef} Let $(A,\mu,\iota_A)$ be an associative unital and commutative algebra object in $\C$. Define $\Rep A$ to be the category whose objects are given by pairs $(V,\mu_V)$ where $V \in \ob(\mathcal{C})$ and $\mu_V \in \Hom_\CCC(A \otimes V,V)$ are subject to the following assumptions:
\begin{itemize}
\item Associativity: $\mu_V \circ (\Id_A \otimes \mu_V)=\mu_V \circ ( \mu \otimes \Id_A) \circ a^{-1}_{A,A,V} : A \otimes ( A \otimes V) \rightarrow V \;$; 
\item Unit: $\mu_V \circ (\iota_A \circ \mathds{1}) \circ l_V^{-1} = \Id_V : V \rightarrow V$.
\end{itemize}
The morphisms of $\Rep A$ are defined as follows: 
$$ \Hom_{\Rep A}\big((V,\mu_V),(W,\mu_W)\big) = \left\{ f \in \Hom_{\CCC}\left(V,W\right) \; \big| \; f \circ \mu_V = \mu_W \circ (\Id_A \otimes f) \right\} \; . $$
Thus, a morphism in $\Rep A$ is just a morphism in $\CCC$ that intertwines the $A$-actions maps. An object of $\Rep A$ is also called an $A$-module.
\end{definition}

Given that the base category $\C$ is monoidal, one can define a new tensor product, $\otimes_A$, that makes $\Rep A$ a monoidal category as well. For more details, see \cite{EGNO,KO,P}. This new tensor product is defined as follows: 

\begin{definition} \label{tensorAdef} Let $(V,\mu_V),(W,\mu_W) \in \ob(\Rep A)$. Define their tensor product to be the pair $(V \otimes_A W, \mu_{V \otimes_A W})$ where
\begin{equation} \label{tensorAobject}
V \otimes_A W = \frac{V \otimes W}{\im (m^{\text{left}} - m^{\text{right}})} \; ,
\end{equation}
and
\begin{align*}
m^{\text{left}} = (\mu_V \otimes \Id_W) \circ \, (c_{V,A} \otimes \Id_W) \, \circ a^{-1}_{V,A,W} : \quad &V \otimes (A \otimes W) \longrightarrow V \otimes W \; , \\
m^{\text{right}} = \Id_V \otimes \mu_W : \quad &V \otimes (A \otimes W) \longrightarrow V \otimes W \; .
\end{align*}
Note that in the quotient \eqref{tensorAobject}, the action of $A$ on $V \otimes W$ via $\mu_V$ is identified with its action via $\mu_W$. Hence, one can simply define 
$$ \mu_{V \otimes_A W} = \mu_V \otimes \Id_W \, \circ \; a^{-1}_{A,V,W} : A \otimes (V \otimes_A W)  \to V \otimes_A W\; , $$ 
so that $(V,\mu_V) \otimes_A (W,\mu_W) = (V \otimes_A W,\mu_{V \otimes_A W}) \in \ob(\Rep A)$.
\end{definition}

In general, $\Rep A$ may not be braided, however, it was proven in \cite{P} that the braiding of $\C$ induced a braiding on a full subcategory of $\Rep^0A$ of $\Rep A$ defined as follows:

\begin{definition} \label{Rep0def} Let $(A,\mu,\iota_A)$ be an associative unital and commutative algebra object in $\C$. Define $\text{Rep}^0A$ to be the full subcategory of $\Rep A$ whose objects $(V,\mu_V)$ satisfy
$$ \mu_V \circ ( c_{V,A} \circ c_{A,V} ) =\mu_V \; . $$
The category $\Rep^0 A$ is often referred to as the category of \textit{local} or \textit{untwisted} $A$-modules.
\end{definition}



A valuable tool for the study of both $\Rep A$ and $\Rep^0 A$ is the following induction functor:
\begin{definition} \label{indfunctordef} Let $(A,\mu,\iota_A)$ be an associative unital and commutative algebra object in $\C$. Define a functor $\ind :  \CCC  \longrightarrow \Rep A$ by
\begin{align*}
V & \longmapsto \big(A \otimes V \, , \, (\mu \otimes Id_V) \circ a_{A,A,V}^{-1}\big) \; ,\\
[V \overset{f}{\rightarrow} W] & \longmapsto \Id_A \otimes f \; .
\end{align*}
\end{definition}

A crucial property of this induction functor $\ind$ is that it is a \textit{monoidal functor} (a functor that respects tensor products up to fixed natural isomorphisms). In Section 2 of \cite{CKM}, the authors study further properties of $\ind$ and obtain the following result:

\begin{proposition} \label{C0induction}{\em {\bf \cite[Theorem 2.67]{CKM} }} Let $\C^0$ denote the full subcategory of $\C$ consisting of objects that induce to $\Rep^0 A$. Then $\ind : \C^0 \rightarrow \Rep^0 A$ is a braided tensor functor.
\end{proposition}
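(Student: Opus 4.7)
The plan is to verify three things in sequence: (i) that $\ind$ restricts to a functor $\C^0 \to \Rep^0 A$; (ii) that this restriction is monoidal; and (iii) that it intertwines the two braidings. Point (i) is immediate by definition: $\C^0$ is the full subcategory of objects $V$ with $\ind(V) \in \ob(\Rep^0 A)$, so restricting the codomain is well defined.

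For (ii), the fact that $\ind : \C \to \Rep A$ is monoidal is already asserted in the paragraph preceding the proposition. The structural natural isomorphism
\[ \ind(V) \otimes_A \ind(W) = (A \otimes V) \otimes_A (A \otimes W) \xrightarrow{\sim} A \otimes (V \otimes W) = \ind(V \otimes W) \]
is built from $\mu$ after using the associator and a braiding $c_{V,A}$ to bring the two copies of $A$ adjacent. Since $\otimes_A$ on $\Rep^0 A$ agrees with its ambient definition in $\Rep A$, and since the components of the structural isomorphism are morphisms in $\Rep A$ between objects that already lie in $\Rep^0 A$, the restriction remains monoidal.

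For (iii), recall that the Pareigis braiding on $\Rep^0 A$ is the descent of the ambient braiding $c_{M,N}$ to the relative tensor product, the locality condition being exactly what guarantees well-definedness. The task is to show that, under the monoidal isomorphism above, the braiding $c^A_{\ind(V),\ind(W)}$ corresponds to $\ind(c_{V,W}) = \Id_A \otimes c_{V,W}$. Both sides can be unpacked into explicit morphisms $A \otimes V \otimes W \to A \otimes W \otimes V$ in $\C$, after pre- and post-composing with the structural isomorphisms.

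The main obstacle is this last braiding compatibility. It requires a careful diagram chase that invokes: the naturality and hexagon axioms for $c$, the associativity and unitality of $\mu$, the commutativity axiom $\mu \circ c_{A,A} = \mu$, and, crucially, the locality of $V$ (equivalently, of $\ind(V) \in \Rep^0 A$) that forces the appropriate double braidings against $A$ to be absorbed by $\mu$ on the correct side. Once this identification is established, $\ind : \C^0 \to \Rep^0 A$ is both monoidal and braiding-preserving, hence a braided tensor functor.
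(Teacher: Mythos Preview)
The paper does not supply its own proof of this proposition: it is quoted verbatim as \cite[Theorem 2.67]{CKM} and treated as an imported result, so there is nothing in the paper to compare against at the level of argument.

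Your outline is the standard route and is structurally sound. A couple of remarks. In step~(ii) you should also note (or at least invoke) that $\Rep^0 A$ is closed under $\otimes_A$; this is part of Pareigis's result cited as \cite{P} in the paper, and without it the restricted functor has no monoidal target. In step~(iii) your phrase ``locality of $V$'' is a slight abuse: $V$ lives in $\C$, not in $\Rep A$. What you actually need is the hypothesis $\ind(V)\in\Rep^0 A$, which unwinds to the statement that the action map $(\mu\otimes\Id_V)\circ a^{-1}_{A,A,V}$ is invariant under precomposition with the monodromy $c_{A\otimes V,A}\circ c_{A,A\otimes V}$; via the hexagon axiom and commutativity of $\mu$ this is what lets you cancel the extra $c_{V,A}$ that appears when you transport $c^A$ through the structural isomorphism. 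You name all of the right ingredients for this chase but do not perform it, so as written the proposal is an accurate plan rather than a proof; the actual verification in \cite{CKM} occupies a page or so of string-diagram manipulation.
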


\section{Sum Completion of a Category $\CCC$} \label{sec:category}

In this section, we construct a direct sum completion $\C_{\oplus}$ of an additive $\mathbb{F}$-linear category $\C$ and show how to transfer additional categorical structure from $\C$ to $\dCCC$. \\


%


%
\subsection{The category $\dCCC$}

In this subsection, let $\CCC$ be an additive $\mathbb{F}$-linear category. The category $\dCCC$ can be thought of as the subcategory of $\text{Ind}(\CCC)$ \cite{AGV} whose objects are the inductive systems that produce arbitrary \textit{coproducts}, see \cite{PP} for instance.

\begin{definition} \label{sumcomp} Define $\dCCC$ by setting:
\begin{align*}
\ob(\dCCC) = \left\{ \; \bigoplus_{s \in S} X_s \; \left| \; \begin{array}{l} S \text{ is a set} \\ X_s \in \ob(\CCC) \text{ for every } s \in S \end{array}\right. \right\} \\
\Hom_{\dCCC}\left(\;\bigoplus_{s \in S} X_s,\bigoplus_{t \in T} Y_t\right) = \left\{\Big(\alpha,\{f_{s,t}\}_{s \in S}^{t \in \alpha(s)}\Big)\right\} \Big/ {}_\sim
\end{align*}
where
\begin{itemize}
\item $\alpha : \{\text{Finite subsets of } S\} \rightarrow \{\text{Finite subsets of } T\}$ is a function that commutes with unions. 
For any singleton $\{s\} \subseteq S$, we let $\alpha(s) = \alpha\big(\{s\}\big)$;
\item $f_{s,t} \in \Hom_\CCC(X_s,Y_t)$ for any $t \in \alpha(s)$;
\item $\sim$ is an equivalence relation defined by:
\begin{equation*} 
\Big(\alpha,\{f_{s,t}\}_{s \in S}^{t \in \alpha(s)}\Big) \sim \Big(\beta,\{g_{s,t}\}_{s \in S}^{t \in A(s)}\Big) \quad \Leftrightarrow \quad 
\left[\begin{array}{cl}
(1) \quad f_{s,t} = 0_{s,t} & \text{if } t \in \alpha(s) \backslash \beta(s) \\
(2) \quad f_{s,t} = g_{s,t} & \text{if } t \in \alpha(s) \cap \beta(s) \\
(3) \quad g_{s,t} = 0_{s,t} & \text{if } t \in \beta(s) \backslash \alpha(s)
\end{array}\right. ;
\end{equation*}
\item the composition of a pair of morphisms $\Big(\beta,\{g_{t,r}\}_{t \in T}^{r \in \beta(t)}\Big) \in \Hom_{\dCCC}(\bigoplus_{t \in T} Y_t , \bigoplus_{r \in R} Z_r)$ and $\Big(\alpha,\{f_{s,t}\}_{s \in S}^{t \in \alpha(s)}\Big) \in \Hom_{\dCCC}( \bigoplus_{s \in S} X_s , \bigoplus_{t \in T} Y_t)$ is defined to be the equivalence class of
\begin{align*}
\left(\beta \circ \alpha, \; \Bigg\{\sum_{\substack{t \in \alpha(s) \\ \text{ s.t. } r \in \beta(t)}} g_{t,r} \circ f_{s,t}\Bigg\}_{s \in S}^{r \in \big(\beta \, \circ \, \alpha\big)(s)}\right);
\end{align*}
\item the identity morphism of $\bigoplus_{s \in S} X_s$ is the equivalence class of $\left(\Id_{\text{f.s.}(S)},\{\Id_{X_s}\}_{s \in S}\right)$ where $\text{f.s.}(S)$ denotes the collection of finite subsets of $S$.
\end{itemize}
\end{definition}

\begin{proposition} The elements of Definition~\ref{sumcomp} define a category structure $\dCCC$. In particular:
\begin{itemize}
\item $\sim$ is an equivalence relation;
\item the composition is compatible with $\sim$ in both arguments;
\item the composition is associative;
\item the identity morphism of an object preserves any morphism under composition on both sides.
\end{itemize}
\begin{proof} First, let's show that $\sim$ is an equivalence relation. For reflexivity, just note that given a morphism $\Big(\alpha,\{f_{s,t}\}_{s \in S}^{t \in \alpha(s)}\Big)$ and a singleton $s \in S$, one has $\alpha(s) \backslash \alpha(s) = \emptyset$. Symmetry of the relation $\sim$ is clear from the definition. For transitivity, let 
\begin{equation} \label{transitivity}
\Big(\alpha,\{f_{s,t}\}_{s \in S}^{t \in \alpha(s)}\Big) \sim \Big(\beta,\{g_{s,t}\}_{s \in S}^{t \in \beta(s)}\Big) \quad \text{ and } \quad \Big(\beta,\{g_{s,t}\}_{s \in S}^{t \in \beta(s)}\Big) \sim \Big(\gamma,\{h_{s,t}\}_{s \in S}^{t \in \beta(s)}\Big), 
\end{equation}
be three morphisms between the same two objects of $\dCCC$. Fix $s \in S$ and let $t \in \alpha(s) \cap \gamma(s)$. Then $t$ is either in $\beta(s)$ or not. If it is, then $f_{s,t} = g_{s,t}$ by the first relation of~\eqref{transitivity} and $g_{s,t} = h_{s,t}$ by the second relation so that $f_{s,t} = h_{s,t}$ for such $s$ and $t$. Next, let $t \in \alpha(s) \backslash \gamma(s)$. If $t$ is also in $\beta(s)$, we get $f_{s,t} = g_{s,t}$ from the first relation, however such a $t$ has to be in $\beta(s) \backslash \gamma(s)$ so $g_{s,t} = 0_{s,t}$ by the second relation of~\eqref{transitivity}. It follows that $f_{s,t} = 0_{s,t}$ as desired. Else, $t$ is not in $\beta(s)$ so the first relation of~\eqref{transitivity} directly gives $f_{s,t} = 0_{s,t}$. Finally, reversing the roles of $\alpha$ and $\gamma$ in the previous argument gives $h_{s,t} = 0_{s,t}$ for $t \in \gamma(s) \backslash \alpha(s)$ and we conclude that $\sim$ is transitive, hence an equivalence relation.\\

Let's now show that the composition of morphisms is compatible with $\sim$ in both arguments. Let
\begin{align}
\Big(\alpha,\{f_{s,t}\}_{s \in S}^{t \in \alpha(s)}\Big) &\sim \Big(\tilde{\alpha},\{\tilde{f}_{s,t}\}_{s \in S}^{t \in \tilde{\alpha}(s)}\Big) \in \Hom_{\dCCC}\left(\;\bigoplus_{s \in S} X_s,\bigoplus_{t \in T} Y_t\right) , \label{firstequiv} \\
\Big(\beta,\{g_{t,r}\}_{t \in T}^{r \in \beta(t)}\Big) &\sim \Big(\tilde{\beta},\{\tilde{g}_{t,r}\}_{t \in T}^{r \in \tilde{\beta}(t)}\Big) \in \Hom_{\dCCC}\left(\;\bigoplus_{t \in T} Y_t,\bigoplus_{r \in R} Z_r\right) . \label{secondequiv}
\end{align}
Then one must show that 
$$ \left(\beta \circ \alpha,\left\{\sum_{{\substack{t \in \alpha(s) \\ \text{ s.t. } r \in \beta(t)}}} g_{t,r} \circ f_{s,t}\right\}_{s \in S}^{r \in \beta\big(\alpha(s)\big)}\right) \quad \sim \quad \left(\tilde{\beta} \circ \tilde{\alpha},\left\{\sum_{{\substack{t \in \tilde{\alpha}(s) \\ \text{ s.t. } r \in \tilde{\beta}(t)}}} \tilde{g}_{t,r} \circ \tilde{f}_{s,t}\right\}_{s \in S}^{r \in \tilde{\beta}\big(\tilde{\alpha}(s)\big)}\right) \; . $$
To do so, fix $s \in S$ and consider three cases for $r$:
\begin{enumerate}
\item[(1)] $r \in \beta\big(\alpha(s)\big) \cap \tilde{\beta}\big(\tilde{\alpha}(s)\big)$. In this case, the equivalence~\eqref{firstequiv} implies that for any $t \in \big(\alpha(s) \backslash \tilde{\alpha}(s)\big)$, $f_{s,t} = 0_{s,t}$ and for any $t \in \big(\tilde{\alpha}(s) \backslash \alpha(s)\big)$, $\tilde{f}_{s,t} = 0_{s,t}$. Additionally, $t \in \alpha(s) \cap \tilde{\alpha}(s)$ implies that $f_{s,t} = \tilde{f}_{s,t}$.
\begin{align}
\sum_{{\substack{t \in \alpha(s) \\ \text{ s.t. } r \in \beta(t)}}} g_{t,r} \circ f_{s,t} &= \sum_{{\substack{t \in \tilde{\alpha}(s) \\ \text{ s.t. } r \in \tilde{\beta}(t)}}} \tilde{g}_{t,r} \circ \tilde{f}_{s,t} \notag \\
\Longleftrightarrow \qquad \sum_{{\substack{t \in \alpha(s) \cap \tilde{\alpha}(s) \\ \text{ s.t. } r \in \beta(t)}}} g_{t,r} \circ f_{s,t} &= \sum_{{\substack{t \in \alpha(s) \cap \tilde{\alpha}(s) \\ \text{ s.t. } r \in \tilde{\beta}(t)}}} \tilde{g}_{t,r} \circ f_{s,t} \; . \label{sumscompositionequiv}
\end{align}
By the equivalence~\eqref{secondequiv}, the two sums of line~\eqref{sumscompositionequiv} must coincide. 
\item[(2)] $r \in \beta\big(\alpha(s)\big) \backslash \, \tilde{\beta}\big(\tilde{\alpha}(s)\big)$. As in the previous case, both sums will only display non-zero terms with an index $t \in \alpha(s) \cap \tilde{\alpha}(s)$. The choice of $r$ makes it impossible for such a $t$ to be in $\tilde{\beta}(t)$ for we have $\{t\} \subseteq \tilde{\alpha}(s) \, \Rightarrow \, \tilde{\beta}(t) \subseteq \tilde{\beta}\big(\tilde{\alpha}(s)\big)$. It follows that the right-hand sum of~\eqref{sumscompositionequiv} is empty and corresponds to $0_{s,r} \in \Hom_\CCC(X_s,Z_r)$. Finally, as any $t \in \alpha(s) \cap \tilde{\alpha}(s)$ such that $r \in \beta(t)$ are also such that $r \in \beta(t) \backslash \tilde{\beta}(t)$, all the terms of the left-hand sum of~\eqref{sumscompositionequiv} are zero by the equivalence~\eqref{secondequiv}. Hence, the two sums of~\eqref{sumscompositionequiv} match.
\item[(3)] $r \in \beta\big(\alpha(s)\big) \backslash \, \tilde{\beta}\big(\tilde{\alpha}(s)\big)$. This case is treated exactly as the case (2) above.
\end{enumerate}
In conclusion, the composition of morphisms is compatible with $\sim$ in both arguments. In particular, we can assume, without loss of generality, that an arbitrary non-zero morphism $\Big(\alpha,\{f_{s,t}\}_{s \in S}^{t \in \alpha(s)}\Big)$ satisfies $f_{s,t} \neq 0_{s,t}$ whenever defined. Moreover, such a reduced form has to be unique by the definition of $\sim$. Systematically reducing morphisms in such a way allows to view arbitrary compositions in a simpler way. Consider the component morphism of the composition $\Big(\beta,\{g_{t,r}\}_{t \in T}^{r \in \beta(t)}\Big) \circ \Big(\alpha,\{f_{s,t}\}_{s \in S}^{t \in \alpha(s)}\Big)$ corresponding to $X_s \rightarrow Z_r$ for fixed $s \in S$ and $r \in R$. We have the following natural bijection:
\begin{equation} \label{piecewisenonzero}
\left\{\begin{array}{c} \text{Terms of the component map} \\ \sum_{{\substack{t \in \alpha(s) \\ \text{ s.t. } r \in \beta(t)}}} g_{t,r} \circ f_{s,t} \end{array}\right\}
\qquad \stackrel{1:1}{\longleftrightarrow} \qquad
\left\{\begin{array}{c} \text{Piecewise non-zero maps} \\ X_s \rightarrow Y_t \rightarrow Z_r	
\end{array}\right\}
\end{equation}

Next, we will show that the composition is associative. Let
\begin{align*}
\Big(\alpha,\{f_{s,t}\}_{s \in S}^{t \in \alpha(s)}\Big) &\in \Hom_{\dCCC}\left(\;\bigoplus_{s \in S} X_s,\bigoplus_{t \in T} Y_t\right) , \\
\Big(\beta,\{g_{t,r}\}_{t \in T}^{r \in \beta(t)}\Big) &\in \Hom_{\dCCC}\left(\;\bigoplus_{t \in T} Y_t,\bigoplus_{r \in R} Z_r\right) , \\
\Big(\gamma,\{h_{r,d}\}_{r \in R}^{d \in \gamma(t)}\Big) &\in \Hom_{\dCCC}\left(\;\bigoplus_{r \in R} Z_r,\bigoplus_{d \in D} A_d\right) ,
\end{align*}
be three morphisms where any defined component morphism is non-zero.  Associativity of the composition holds if and only if for any fixed $s \in S$ and $d \in D$, 
\begin{equation} \label{compositionassoc}
\sum_{{\substack{r \in \beta\big(\alpha(s)\big) \\ \text{ s.t. } d \in \gamma(r)}}} h_{r,d} \circ \left( \sum_{{\substack{t \in \alpha(s) \\ \text{ s.t. } r \in \beta(t)}}} g_{t,r} \circ f_{s,t} \right) \;\;\; = \sum_{{\substack{t \in \alpha(t) \\ \text{ s.t. } d \in \gamma\big(\beta(t)\big)}}} \left( \sum_{{\substack{r \in \beta(t) \\ \text{ s.t. } d \in \gamma(r)}}} h_{r,d} \circ g_{t,r} \right) \circ f_{s,t} \; .
\end{equation}
Using~\eqref{piecewisenonzero} and the additive $\mathbb{F}$-linear structure of $\CCC$, we see that the sets of terms on either side of the equality~\eqref{compositionassoc} are both in bijection with the set of all piecewise non-zero maps $X_s \rightarrow Y_t \rightarrow Z_r \rightarrow A_d$. It follows that the set of terms on either side of~\eqref{compositionassoc} must coincide and as a result, the composition is associative. It is also easy to see that the identity $(\Id_{f.s.(S)},\{\Id_{X_s}\}^{t \in \alpha(s)}_{s \in S})$ preserves morphisms.
\end{proof}
\end{proposition}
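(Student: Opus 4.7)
My plan is to verify the four bulleted assertions in the order they are listed, since each subsequent item makes implicit or explicit use of the previous ones. None of the four is deep; the content is bookkeeping with multiple index sets, and the only conceptual move is the bijection used for associativity.

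First, for $\sim$ being an equivalence relation, reflexivity is immediate: given $(\alpha,\{f_{s,t}\})$ and any $s \in S$, both $\alpha(s)\setminus\alpha(s)$ and the dual set are empty, so conditions (1) and (3) are vacuous and (2) follows from $f_{s,t}=f_{s,t}$. Symmetry is evident from the visibly symmetric form of the relation. For transitivity, I would fix $s \in S$ and partition potential indices $t$ according to membership in each of $\alpha(s), \beta(s), \gamma(s)$; in each of the resulting cases, the two given equivalences force the required outcome for the pair $(\alpha,\gamma)$. This is exactly the argument sketched in the excerpt.

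Second, for compatibility of composition with $\sim$ in both slots, I would take the two pairs of equivalent morphisms as in~\eqref{firstequiv}–\eqref{secondequiv}, fix $s \in S$ and $r$ in either $\beta(\alpha(s))$ or $\tilde\beta(\tilde\alpha(s))$, and analyse the component sum at $(s,r)$. The key observation is that whenever $t \in \alpha(s)\setminus\tilde\alpha(s)$ the first equivalence forces $f_{s,t}=0$, and analogously on the $\tilde\alpha$ side, so both component sums reduce to sums indexed by $t \in \alpha(s)\cap\tilde\alpha(s)$; the second equivalence then matches the summands term-by-term (or forces them both to be zero in the cases where $r$ is in only one of $\beta(\alpha(s)),\tilde\beta(\tilde\alpha(s))$). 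Once this compatibility is established, every equivalence class admits a unique \emph{reduced representative} in which $f_{s,t}\neq 0$ whenever defined, obtained by removing the zero components from $\alpha(s)$.

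Third, associativity is the only step with geometric content. After passing to reduced representatives, the component map $X_s \to A_d$ of either iterated composition is a finite sum whose terms are compositions $h_{r,d}\circ g_{t,r}\circ f_{s,t}$ along some chain. I would establish a natural bijection between the indexing set of terms of either side of~\eqref{compositionassoc} and the set of all ``piecewise non-zero chains'' $X_s \to Y_t \to Z_r \to A_d$, using~\eqref{piecewisenonzero} applied to each pair of consecutive morphisms. Since $\mathbb{F}$-bilinearity of composition in $\CCC$ allows the double sums to be regrouped freely and the two indexing sets coincide under this bijection, the two iterated compositions give the same morphism. Fourth, the identity claim is the simplest: for $(\Id_{\text{f.s.}(S)},\{\Id_{X_s}\})$ composed on either side with an arbitrary $(\alpha,\{f_{s,t}\})$, the function on finite subsets is unchanged and the surviving component sum at index $(s,t)$ contains the single term $f_{s,t}\circ\Id_{X_s}=f_{s,t}$ (or $\Id_{Y_t}\circ f_{s,t}$), recovering the original morphism.

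The main obstacle is purely organisational: for each axiom one must simultaneously track up to three set-valued functions and all their pairwise intersections and differences, and handle the sub-cases where a term vanishes versus where two nonzero terms must be compared. The bijection in the associativity step is what keeps this manageable, because once phrased in terms of piecewise non-zero chains the two sides of~\eqref{compositionassoc} are manifestly equal without any further case analysis. Everything else is routine once the reduced-representative observation from the compatibility step is in hand.
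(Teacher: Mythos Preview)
Your proposal is correct and follows essentially the same route as the paper: the same case analysis for transitivity, the same reduction to $t\in\alpha(s)\cap\tilde\alpha(s)$ for compatibility of composition with $\sim$, the same reduced-representative observation, and the same bijection with piecewise non-zero chains for associativity. The only cosmetic difference is that you phrase the transitivity case split as a partition by membership in all three of $\alpha(s),\beta(s),\gamma(s)$, whereas the paper organises it by first splitting on $\alpha(s)$ versus $\gamma(s)$ and then checking $\beta(s)$ within each case.
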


The next definitions and proposition focus on transferring the additive and $\mathbb{F}$-linear structure of $\CCC$ to $\dCCC$. This structure will be fundamental to all our further uses of $\dCCC$.


\begin{definition} \label{addition} Define an addition on $\Hom_{\dCCC}\big(\bigoplus_{s \in S} X_s,\bigoplus_{t \in T} Y_t\big)$ as follows:
\begin{equation} \label{additionsumcomp}
\Big(\alpha_1,\{f^1_{s,t}\}_{s \in S}^{t \in \alpha_1(s)}\Big) + \Big(\alpha_2,\{f^2_{s,t}\}_{s \in S}^{t \in \alpha_2(s)}\Big) = \Bigg(\alpha_1 \cup \alpha_2,\{\sigma_{s,t}\}_{s \in S}^{t \in \big(\alpha_1\cup\alpha_2\big)(s)}\Bigg), 
\end{equation}
where
\begin{itemize}
\item for any finite subset $A \subseteq S$, define $\big(\alpha_1 \cup \alpha_2\big)(A) = \alpha_1(A) \cup \alpha_2(A)$ ;
\item for any $s \in S$ and $t \in \big(\alpha_1 \cup \alpha_2\big)(s)$, define $\sigma_{s,t} = 
\left\{\begin{array}{cl}
f^1_{s,t} & \text{if } t \in \alpha_1(s) \backslash \alpha_2(s) \\
f^1_{s,t} + f^2_{s,t} & \text{if } t \in \alpha_1(s) \cap \alpha_2(s) \\
f^2_{s,t} & \text{if } t \in \alpha_2(s) \backslash \alpha_1(s) \\
\end{array}\right.$.
\end{itemize}

Given any $\lambda \in \mathbb{F}$, define a scalar multiplication by $\lambda$ on $\Hom_{\dCCC}\big(\bigoplus_{s \in S} X_s,\bigoplus_{t \in T} Y_t\big)$ as follows:
\begin{equation} \label{scalarmulteq}
\lambda \cdot \Big(\alpha,\{f_{s,t}\}_{s \in S}^{t \in \alpha(s)}\Big) = \Big(\alpha,\{\lambda f_{s,t}\}_{s \in S}^{t \in \alpha(s)}\Big)
\end{equation}

\end{definition}

\begin{definition} \label{zeroob} Define a zero object $0_{\dCCC} = \bigoplus_{0 \in \{0\}} 0_0$ where $0_0 = 0 \in \ob(\CCC)$. Also, define the zero morphism in $\Hom_{\dCCC}\big(\bigoplus_{s \in S} X_s,\bigoplus_{t \in T} Y_t\big)$ as $\big(\Omega, \emptyset\big)$ where $\Omega(A) = \emptyset \subseteq T$ for every finite subset $A \subseteq S$. In particular, this gives

\begin{equation} \label{zeroobjecteq}
\Hom_{\dCCC}(0_{\dCCC},0_{\dCCC}) = \left\{\begin{array}{c} \text{The equivalence} \\
\text{class of } \big(\Omega,\emptyset\big).
\end{array}\right\} \; .
\end{equation}
\end{definition}

\begin{definition} \label{finiteds} Define finite direct sums in $\dCCC$ as follows. Given a pair of objects $\bigoplus_{s \in S} X_s$ and $\bigoplus_{t \in T} Y_t$ of $\dCCC$, we have an object $\bigoplus_{a \in S \sqcup T} A_a$ where
\begin{align*}
A_a = \left\{\begin{array}{cl}X_a & \text{if } a \in S \\ Y_t & \text{if } a \in T\end{array}\right. .
\end{align*}
with projection and inclusion morphisms $p_S,p_T,i_S,i_T$ satisfying 
\begin{align} \label{dsequalities}
p_S \circ i_S = \Id_{\bigoplus_{s \in S}X_s}, && p_T \circ i_T = \Id_{\bigoplus_{t \in T}Y_t}, && i_S \circ p_S \; + \; i_T \circ p_T = \Id_{\bigoplus_{a \in S \sqcup T} A_a}.
\end{align}

Concretely, define $p_S,p_T,i_S,i_T$ as follows:
\begin{align*}
p_S &= \Big(\pi_S,\{\Id_{X_a}\}_{a \in S \sqcup T}\Big) \in \Hom_{\dCCC}\left(\;\bigoplus_{a \in S \sqcup T} A_a,\bigoplus_{s \in S} X_s\right), \\
p_T &= \Big(\pi_T,\{\Id_{Y_a}\}_{a \in S \sqcup T}\Big) \in \Hom_{\dCCC}\left(\;\bigoplus_{a \in S \sqcup T} A_a,\bigoplus_{t \in T} Y_t\right), \\
i_S &= \Big(\iota_S,\{\Id_{X_s}\}_{s \in S}\Big) \in \Hom_{\dCCC}\left(\;\bigoplus_{s \in S} X_s,\bigoplus_{a \in S \sqcup T} A_a\right), \\
i_T &= \Big(\iota_T,\{\Id_{Y_t}\}_{t \in T}\Big) \in \Hom_{\dCCC}\left(\;\bigoplus_{s \in S} X_s,\bigoplus_{a \in S \sqcup T} A_a\right)
\end{align*}
where 
\begin{itemize}
\item $\pi_S(A) = \{a \in A \; | \; a \in S\} \subseteq S$ for any finite subset $A \in S \sqcup T$;
\item $\pi_T(A) = \{a \in A \; | \; a \in T\} \subseteq T$ for any finite subset $A \in S \sqcup T$;
\item $\iota_S(B) = B \subseteq S \sqcup T$ for any finite subset $B \subseteq S$;
\item $\iota_T(C) = C \subseteq S \sqcup T$ for any finite subset $C \subseteq T$.
\end{itemize}
\end{definition}

\begin{proposition} The elements of Definitions~\ref{addition}, \ref{zeroob}, and~\ref{finiteds} make $\dCCC$ an additive $\mathbb{F}$-linear category. In particular:
\begin{itemize}
\item the addition~\eqref{additionsumcomp} and scalar multiplication~\eqref{scalarmulteq} are compatible with the equivalence relation $\sim$ in both arguments;
\item the scalar multiplication is distributive with respect to the addition;
\item the $\Hom$-spaces of $\dCCC$ form $\mathbb{F}$-vector spaces with the zero morphisms $(\Omega,\emptyset)$ and where the inverse of a morphism $(\alpha,\{f_{s,t}\})$ is just $(\alpha,\{-f_{s,t}\})$;
\item the composition of morphisms in $\dCCC$ is $\mathbb{F}$-bilinear;
\item the equalities ~\eqref{zeroobjecteq} and ~\eqref{dsequalities} indeed hold.
\end{itemize}

\begin{proof} First, let's prove that the addition and scalar multiplication are compatible with the equivalence relation $\sim$ that defines morphisms. Let $\lambda \in \mathbb{F}$ and let 
\begin{align}
\Big(\alpha,\{f_{s,t}\}_{s \in S}^{t \in \alpha(s)}\Big) &\sim \Big(\tilde{\alpha}, \{\tilde{f}_{s,t}\}_{s \in S}^{t \in \tilde{\alpha}(s)}\Big) \in \Hom_{\dCCC}\big(\bigoplus_{s \in S} X_s,\bigoplus_{t \in T} Y_t\big) \; , \label{firstequivadd} \\
\Big(\beta,\{g_{s,t}\}_{s \in S}^{t \in \beta(s)}\Big) &\sim \Big(\tilde{\beta}, \{\tilde{g}_{s,t}\}_{s \in S}^{t \in \tilde{\beta}(s)}\Big) \in \Hom_{\dCCC}\big(\bigoplus_{s \in S} X_s,\bigoplus_{t \in T} Y_t\big) \; . \label{secondequivadd}
\end{align}
We have to show that 
\begin{equation}	\label{addsim}
\Big(\alpha,\{f_{s,t}\}_{s \in S}^{t \in \alpha(s)}\Big) + \lambda \cdot \Big(\beta,\{g_{s,t}\}_{s \in S}^{t \in \beta(s)}\Big) \; \sim \; \Big(\tilde{\alpha}, \{\tilde{f}_{s,t}\}_{s \in S}^{t \in \tilde{\alpha}(s)}\Big) + \Big(\tilde{\beta}, \{\lambda \tilde{g}_{s,t}\}_{s \in S}^{t \in \tilde{\beta}(s)}\Big) . 
\end{equation} 
Fix $s \in S$. Then there are three different cases of $t$ to distinguish in order to prove~\eqref{addsim}: 
\begin{enumerate}
\item[(1)] $t \in \big(\alpha(s) \cup \beta(s)\big) \cap \big(\tilde{\alpha}(s) \cup \tilde{\beta}(s)\big)$. There are nine subcases here. To begin with, let $t \in \big(\alpha(s) \cap \beta(s)\big) \cap \big(\tilde{\alpha}(s) \cap \tilde{\beta}(s)\big)$. Then by the equivalences above and the definition of the addition,  the equality $f_{s,t} + \lambda \cdot g_{s,t} = \tilde{f}_{s,t} + \lambda \tilde{g}_{s,t} \in \Hom_\CCC(X_s,Y_t)$ holds (the functions are equal to their $\tilde{}$ equivalent for such $t$). Next, let $t \in \big(\alpha(s) \cap \beta(s)\big) \cap \big(\tilde{\alpha}(s) \backslash \tilde{\beta}(s)\big)$. For such $t$, \eqref{addsim} is satisfied if and only if $f_{s,t} + \lambda \cdot g_{s,t} = \widetilde{f}_{s,t}$, but it holds since the equivalences imply $f_{s,t} = \tilde{f}_{s,t}$ and $g_{s,t} = 0_{s,t}$. The seven other cases are treated similarly.   
\item[(2)] $t \in \big(\alpha(s) \cup \beta(s)\big) \backslash \big(\tilde{\alpha}(s) \cup \tilde{\beta}(s)\big)$. There are three subcases here. For all such $t$, both $\tilde{f}_{s,t}$ and $\tilde{g}_{s,t}$ are not defined. Therefore, the equivalences~\eqref{firstequivadd} and~\eqref{secondequivadd} imply that  any $f_{s,t}$ and $g_{s,t}$ that are defined must be $0_{s,t}$. Checking the requirements of~\eqref{addsim} in all cases comes down to checking that $f_{s,t} + \lambda \cdot g_{s,t} = 0_{s,t} \in \Hom_\CCC(X_s,Y_t)$. For instance, if $t \in \big(\alpha(s) \cap \beta(s)\big) \backslash \big(\tilde{\alpha}(s) \cup \tilde{\beta}(s)\big)$, then only $f_{s,t}$ and $g_{s,t}$ are defined and are zero by ~\eqref{firstequivadd} and~\eqref{secondequivadd}. The requirement of~\eqref{addsim} for this choice of $t$ is that $f_{s,t} + \lambda \cdot g_{s,t} = 0_{s,t}$, but this is obvious by the explanation of the previous sentence.
\item[(3)] $t \in \big(\tilde{\alpha}(s) \cap \tilde{\beta}(s)\big) \backslash \big(\alpha(s) \cup \beta(s)\big)$ is analogous to (2).
\end{enumerate}


Setting $\lambda = 1$ in \eqref{addsim} shows that addition is compatible with $\sim$. To show that scalar multiplication is compatible with $\sim$, we first show that $(\Omega,\emptyset)$ (see Definition \ref{zeroob}) is a neutral element for addition in $\Hom_{\dCCC}\big(\bigoplus_{s \in S} X_s,\bigoplus_{t \in T} Y_t\big)$. Given a morphism $\Big(\alpha,\{f_{s,t}\}_{s \in S}^{t \in \alpha(s)}\Big)$, one has the equivalence $(\Omega,\emptyset) \sim (\alpha,\{0_{s,t}\}_{s \in S}^{t \in \alpha(t)})$ from which neutrality follows. Compatibility of scalar multiplication with $\sim$ follows from $\eqref{addsim}$.\\




Next, we have to show that an arbitrary $\Hom_{\dCCC}\big(\bigoplus_{s \in S} X_s,\bigoplus_{t \in T} Y_t\big)$ are $\mathbb{F}$-vector spaces. We already have a neutral element $(\Omega,\emptyset)$. To explain associativity of addition~\eqref{addition}, consider three morphisms $\Big(\alpha,\{f_{s,t}\}_{s \in S}^{t \in \alpha(s)}\Big)$, $\Big(\beta,\{g_{s,t}\}_{s \in S}^{t \in \alpha(s)}\Big)$, $\Big(\gamma,\{h_{s,t}\}_{s \in S}^{t \in \alpha(s)}\Big)$ to add up. Firstly, note that for any $s \in S$, we have $\big(\alpha(s) \cup \beta(s) \big) \cup \gamma(s) = \alpha(s) \cup \big( \beta(s) \cup \gamma(s)\big)$. Secondly, as addition is compatible with $\sim$, we can assume that for any choice of $s \in S$ and $t \in \alpha(s) \cup \beta(s) \cup \gamma(s)$, $f_{s,t}$, $g_{s,t}$ and $h_{s,t}$ are all defined by letting them be $0_{s,t}$ if they were not already defined. The associativity requirement of~\eqref{addition} then becomes $(f_{s,t} + g_{s,t}) + h_{s,t} = f_{s,t} + (g_{s,t} + h_{s,t})$ for all $s \in S$ and $t \in \alpha(s) \cup \beta(s) \cup \gamma(s)$. The $\Hom_\CCC(X_s,Y_t)$ are already $\mathbb{F}$-vector spaces so the addition~\eqref{addition} is indeed associative. Explaining commutativity of addition and distributivity of scalar multiplication can be done analogously by following these steps:
\begin{enumerate}
\item Using the compatibility of addition and scalar multiplication with $\sim$, assume that all the involved morphisms of $\dCCC$ (a finite number in each case) have the same set map by defining zero component morphisms where needed;
\item Recognise the target property component-wise in the $\Hom$-spaces of the additive $\mathbb{F}$-linear category $\CCC$ and conclude that the property holds in $\dCCC$ as well.
\end{enumerate}
The additive inverse of a morphism in $\dCCC$ is obtained by multiplying it by the scalar $-1 \in \mathbb{F}$ and so the $\Hom$-spaces of $\dCCC$ are indeed $\mathbb{F}$-vector spaces. \\

Next, we must show that composition in $\dCCC$ is $\mathbb{F}$-bilinear. Fix $\lambda \in \mathbb{F}$ as well as morphisms
\begin{align}
\Big(\alpha_1,\{f^1_{s,t}\}_{s \in S}^{t \in \alpha_1(s)}\Big), \Big(\alpha_2,\{f^2_{s,t}\}_{s \in S}^{t \in \alpha_2(s)}\Big) &\in \Hom_{\dCCC}\left(\;\bigoplus_{s \in S} X_s,\bigoplus_{t \in T} Y_t\right) , \label{bilincompositionslot1} \\
\Big(\beta_1,\{g^1_{t,r}\}_{t \in T}^{r \in \beta_1(t)}\Big), \Big(\beta_2,\{g^2_{t,r}\}_{t \in T}^{r \in \beta_2(t)}\Big) &\in \Hom_{\dCCC}\left(\;\bigoplus_{t \in T} Y_t,\bigoplus_{r \in R} Z_r\right) . \label{bilincompositionslot2}
\end{align}
Let's show it for the second argument. What has to be shown is that 
\begin{align*}
\Big(\beta_1,\{g^1_{t,r}\}_{t \in T}^{r \in \beta_1(t)}\Big) \circ \bigg( \Big(\alpha_1,\{f^1_{s,t}\}_{s \in S}^{t \in \alpha_1(s)}\Big) + \lambda \cdot \Big(\alpha_2,\{f^2_{s,t}\}_{s \in S}^{t \in \alpha_2(s)}\Big) \bigg) \\
\sim \bigg( \Big(\beta_1,\{g^1_{t,r}\}_{t \in T}^{r \in \beta(t)}\Big) \circ \Big(\alpha_1,\{f^1_{s,t}\}_{s \in S}^{t \in \alpha_1(s)}\Big) \bigg) + \lambda \cdot \bigg( \Big(\beta_1,\{g^1_{t,r}\}_{t \in T}^{r \in \beta_1(t)}\Big) \circ \Big(\alpha_2,\{f^2_{s,t}\}_{s \in S}^{t \in \alpha_2(s)}\Big) \bigg)
\end{align*}
Since composition, addition and scalar multiplication are all compatible with $\sim$, assume that $\alpha_1 = \alpha_2 = \alpha$ and $\beta_1 = \beta_2 = \beta$ by defining zero component morphisms where needed. Then, we have to show that
\begin{equation*}
\sum_{{\substack{t \in \alpha(s) \\ \text{ s.t. } r \in \beta(t)}}} g^1_{t,r} \circ (f^1_{s,t} + \lambda f^2_{s,t}) =
\left( \sum_{{\substack{t \in \alpha(s) \\ \text{ s.t. } r \in \beta(t)}}} g^1_{t,r} \circ f^1_{s,t} \right) + \lambda \cdot \left( \sum_{{\substack{t \in \alpha(s) \\ \text{ s.t. } r \in \beta(t)}}} g^1_{t,r} \circ f^2_{s,t} \right) \; .
\end{equation*}
This is true since compositions in $\CCC$ are $\mathbb{F}$-bilinear. Linearity in the first argument can be proven in the same way. \\

Next, the category $\dCCC$ must have a zero object. We take it to be $0_{\dCCC} = \bigoplus_{0 \in \{0\}} 0_0$ as of Definition \ref{zeroob}. Since $0_0 = 0 \in \ob(\CCC)$, the only possible component endomorphism of $0_{\dCCC}$ is $0_{0,0} \in \Hom_\CCC(0,0)$ and we directly get equation~\eqref{zeroobjecteq}. \\

Finally, we must show that Definition~\ref{finiteds} indeed define direct sums in $\dCCC$. The only things to show here are the equalities of line~\eqref{dsequalities}. Let's treat each of them separately:
\begin{itemize}
\item[(1)] $p_S \circ i_S = \Id_{\bigoplus_{s \in S}X_s}$. The set map of this composition is $\pi_S \circ \iota_S$. As the map $\iota_S$ embeds a finite subset of $S$ into the disjoint union $S \sqcup T$ and $\pi_S$ sends a finite subset of this disjoint union to the collection of its elements belonging to $S$, we get $\pi_S \circ \iota_S = \Id_{\text{f.s.}(S)}$. For any fixed $s \in S$, as $\iota_S(s) = \{s\} = \big(\pi_S \circ \iota_S\big)(s)$, there can be only one component morphism with component $X_s$ in the composition $p_S \circ i_S $ and it has to have codomain $X_s$ as well. Moreover, this component morphism is given by $\Id_{X_s} \circ \Id_{X_s} = \Id_{X_s}$ according to the composition rule in $\dCCC$;
\item[(2)] $p_T \circ i_T = \Id_{\bigoplus_{t \in T}Y_t}$ can be proven exactly as in (1) but with $T$ playing the role of $S$;
\item[(3)] $i_S \circ p_S \; + \; i_T \circ p_T = \Id_{\bigoplus_{a \in S \sqcup T} A_a}$. By definition, the set map of the left side is $(\iota_S \circ p_S) \cup (\iota_T \cup p_T)$. Let $E \subseteq S \sqcup T$ be a finite set. Then $\big(\iota_S \circ p_S\big)(E) \subseteq S \sqcup T$ is the collection of elements of $E$ that belong to $S$ and similarly for $T$ so that $\big((\iota_S \circ p_S) \cup (\iota_T \cup p_T)\big)(E) = E$. For the component morphisms, fix $x \in S \sqcup T$ and observe that:
\begin{align*}
\big(\iota_S \circ p_S\big)(x) = \left\{ \begin{array}{cl}
\{x\} & \text{if } x \in S \\
\emptyset & \text{if } x \in T
\end{array} \right. \; ,  && \big(\iota_T \circ p_T\big)(x) = \left\{ \begin{array}{cl}
\emptyset & \text{if } x \in S \\
\{x\} & \text{if } x \in T
\end{array} \right. \; .
\end{align*}
It follows that for $s \in S \subseteq S \sqcup T$, the only possible component morphism with domain $X_s$ has to have codomain $X_s$ and similarly for $t \in T \subseteq S \sqcup T$. For an arbitrary $x \in S$, the definition of the addition of morphisms make the component morphisms of $i_S \circ p_S \; + \; i_T \circ p_T$ correspond to those of $i_S \circ p_S$ if $x \in S$ and to those of $i_T \circ p_T$ if $x \in T$. In both cases, we obtain the identity of the corresponding object of $\CCC$. The equality $i_S \circ p_S \; + \; i_T \circ p_T = \Id_{\bigoplus_{a \in S \sqcup T} A_a}$ is then assured.
\end{itemize}

This proves the equalities of line~\eqref{dsequalities} showing that $\dCCC$ has finite direct sums. In conclusion, $\dCCC$ is an additive $\mathbb{F}$-linear category, just like $\CCC$.
\end{proof}
\end{proposition}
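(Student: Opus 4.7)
The plan is a two-stage reduction. First, I would establish that the addition~\eqref{additionsumcomp} and scalar multiplication~\eqref{scalarmulteq} are compatible with the equivalence relation $\sim$; second, I would exploit this compatibility to reduce every remaining axiom to its counterpart in the $\mathbb{F}$-vector spaces $\Hom_\CCC(X_s, Y_t)$, where it holds by hypothesis on $\CCC$. A useful byproduct at the start is the observation $(\Omega, \emptyset) \sim (\alpha, \{0_{s,t}\})$ for every set map $\alpha$, which makes the zero morphism of Definition~\ref{zeroob} a two-sided additive identity and also lets any morphism be represented with a larger than necessary set map.

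For compatibility of addition with $\sim$, fix $s \in S$ and partition the candidate targets $t \in T$ according to joint membership in the four sets $\alpha(s), \tilde\alpha(s), \beta(s), \tilde\beta(s)$. On each cell of this partition, the defining relations of the two input equivalences pin down whether each of $f_{s,t}, \tilde f_{s,t}, g_{s,t}, \tilde g_{s,t}$ is zero, equal to its tilded counterpart, or outside the relevant domain, and a case-by-case inspection confirms that the sums produced by~\eqref{additionsumcomp} agree modulo $\sim$. Scalar multiplication by $\lambda \in \mathbb{F}$ is compatible with $\sim$ by inspection, since it acts component-wise. With these two compatibilities in hand, the key technical move is \emph{padding}: any finite family of morphisms between the same two objects may be replaced up to $\sim$ by representatives sharing the common set map given by the union of their set maps, inserting zero components wherever needed. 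After this normalization, associativity and commutativity of addition, distributivity of scalar multiplication over addition, existence of additive inverses, and $\mathbb{F}$-bilinearity of composition all follow by applying the corresponding identities component-wise in the $\Hom$-spaces of $\CCC$, using in the composition case that the indexed sums in Definition~\ref{sumcomp} distribute over sums of component morphisms.

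For the zero object, equation~\eqref{zeroobjecteq} is immediate from $\Hom_\CCC(0, 0) = 0$: any morphism $0_{\dCCC} \to 0_{\dCCC}$ has only zero component morphisms and so equals $(\Omega, \emptyset)$ up to $\sim$. For~\eqref{dsequalities}, I would compute the set maps of $p_S \circ i_S$, $p_T \circ i_T$, and $i_S \circ p_S + i_T \circ p_T$ directly from Definition~\ref{finiteds}: the compositions $\pi_S \circ \iota_S$ and $\pi_T \circ \iota_T$ act as the identity on finite subsets of $S$ and $T$ respectively, while for each $a \in S \sqcup T$ exactly one of $i_S \circ p_S$ or $i_T \circ p_T$ contributes a nonzero component (the identity of $A_a$), so that the sum produces the identity of $\bigoplus_{a \in S \sqcup T} A_a$. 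The main obstacle is the combinatorial bookkeeping in the compatibility proof for addition, where careful enumeration of the cells of the fourfold partition is needed; the zero-padding trick then greatly simplifies every subsequent verification by letting one work with a single common set map throughout.
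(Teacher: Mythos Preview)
Your proposal is correct and follows essentially the same approach as the paper: establish compatibility of addition and scalar multiplication with $\sim$ via a case analysis on target membership, then use the padding trick (common set map with zero components inserted) to reduce all remaining axioms to their component-wise counterparts in $\CCC$, and finally verify the zero object and direct-sum identities by direct computation of set maps and components. The only cosmetic difference is that the paper bundles addition and scalar multiplication into a single $\lambda$-parametrized compatibility check and extracts each separately, whereas you treat scalar multiplication directly by inspection; both routes are equivalent.
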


\begin{definition} \label{arbitrarycoprods} Let $\Big(\bigoplus_{s \in S_i} X_s^i\Big)_{i \in I}$ be a family of objects in $\dCCC$. Define their coproduct to be the object $\bigoplus_{a \in \bigsqcup_{i \in I} S_i} A_a$ where $A_a = X_a^{i_0}$  for $a \in S_{i_0} \subseteq \bigsqcup_{i \in I} S_i$. The structural injections are given by 

$$ \inj_{S_{i_0}} = \Big(\iota_{S_{i_0}},\{\Id_{X_s^{i_0}}\}_{s \in S_{i_0}}\Big) \in \Hom_{\dCCC}\left(\;\bigoplus_{s \in S_{i_0}} X_s^{i_0},\bigoplus_{a \in \bigsqcup_{i \in I} S_i} A_a\right) \\
 $$
where $\iota_{S_{i_0}}(B) = B \subseteq \bigsqcup_{i \in I} S_i$ for any finite subset $B \subseteq S_{i_0}$.
\end{definition}

\begin{proposition} \label{coproductsproof} The elements of Definition~\ref{arbitrarycoprods} indeed define arbitrary coproducts in $\dCCC$.

\begin{proof} We have to show that the map
\begin{eqnarray} \label{mapcoproducts}
M : \; \Hom_{\dCCC}\left(\;\bigoplus_{a \in \bigsqcup_{i \in I} S_i} A_a,\bigoplus_{r \in R} Z_r\right) & \longrightarrow & \prod_{i \in I} \; \Hom_{\dCCC}\left(\bigoplus_{s \in S_i} X_s^i,\bigoplus_{r \in R}, Z_r\right) \; ,\\
F & \longmapsto & \big(F \circ \inj_{S_i}\big)_{i \in I} \notag
\end{eqnarray}
is bijective and functorial in $\bigoplus_{r \in R}, Z_r$. Let $\left(\alpha,\{f_{a,r}\}_{a \in \sqcup_{i \in I} S_i}^{r \in \alpha(a)}\right)$ be a morphism in the domain of $M$ and fix $i_0 \in I$. Then it is straightforward to see that 
\begin{equation} \label{directsumrequirement}
\left(\alpha,\{f_{a,r}\}_{a \in \sqcup_{i \in I} S_i}^{r \in \alpha(a)}\right) \circ \inj_{S_{i_0}} = \left(\alpha|_{\text{f.s.}( S_{i_0})},\{f_{a,r}\}_{a \in S_{i_0}}^{r \in \alpha(a)}\right).
\end{equation}
Since the union $\sqcup_{i \in I} S_i$ is disjoint, the collection of maps $(\alpha|_{\text{f.s.}( S_{i})})_{i \in I}$ uniquely determines $\alpha$. Also, one has $ \{f_{a,r}\}_{a \in \sqcup_{i \in I} S_i}^{r \in \alpha(a)} = \prod_{i \in I}  \{f_{s,r}\}_{s \in S_i}^{r \in \alpha(s)}$. Without loss of generality, all component morphisms $f_{a,r}$ that are defined are non-zero. For any fixed $i_0$, the right hand side morphism of line~\eqref{directsumrequirement} is reduced in the same sense. We conclude that the morphism $\left(\alpha,\{f_{a,r}\}_{a \in \sqcup_{i \in I} S_i}^{r \in \alpha(a)}\right)$ uniquely determines the collection of morphisms $\left(\alpha|_{\text{f.s.}( S_{i})},\{f_{a,r}\}_{a \in S_{i}}^{r \in \alpha(a)}\right)_{i \in I}$, hence $M$ is one to one. Conversely, any collection of morphisms in the codomain of $M$ can be combined into a morphism of $\Hom_{\dCCC}\left(\;\bigoplus_{a \in \bigsqcup_{i \in I} S_i} A_a,\bigoplus_{r \in R} Z_r\right)$ because the union $\bigsqcup_{i \in I} S_i$ is disjoint. This shows that $M$ is also surjective. \\

For functoriality of $M$, let $\left(\gamma,\{g_{r,u}\}_{r \in R}^{u \in \gamma(r)}\right) \in \Hom_{\dCCC}\left(\;\bigoplus_{r \in R} Z_r, \bigoplus_{u \in U} C_u\right)$. Thanks to line~\eqref{directsumrequirement}, we can write that $M\left(\alpha,\{f_{a,r}\}_{a \in \sqcup_{i \in I} S_i}^{r \in \alpha(a)}\right)$ composed with $\left(\gamma,\{g_{r,u}\}_{r \in R}^{u \in \gamma(r)}\right)$ gives
\begin{equation} \label{functorialitycoproduct}
\left(\gamma \circ (\alpha|_{\text{f.s.}( S_{i_0})}),\left\{\sum_{{\substack{r \in \big(\alpha|_{\text{f.s.} (S_{i_0})}\big)(s_{i_0}) \\ \text{ s.t. } u \in \gamma(r)}}} h_{r,u} \circ f_{s,r}\right\}_{s_{i_0} \in S_{i_0}}^{u \in \gamma \circ (\alpha|_{\text{f.s.}( S_{i_0})})(s_{i_0})} \right) \; .
\end{equation}

However, $\gamma \circ (\alpha|_{\text{f.s.} (S_{i_0})}) = (\gamma \circ \alpha)|_{\text{f.s.}( S_{i_0})}$ and $\big(\alpha|_{\text{f.s.}( S_{i_0})}\big)(s_{i_0}) = \alpha(s_{i_0})$ for any $s_{i_0} \in S_{i_0}$.
Thus, the morphism of line~\eqref{functorialitycoproduct} is equal to the result of the application of $M$ to the composition $\left(\gamma,\{g_{r,u}\}_{r \in R}^{u \in \gamma(r)}\right) \circ \left(\alpha,\{f_{a,r}\}_{a \in \sqcup_{i \in I} S_i}^{r \in \alpha(a)}\right)$.  In conclusion, the map $M$ of line~\eqref{mapcoproducts} is both bijective and functorial in $\bigoplus_{r \in R} Z_r$ as required. This in turn proves that $\dCCC$ is closed under taking arbitrary coproducts.
\end{proof}
\end{proposition}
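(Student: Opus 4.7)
The plan is to verify directly that the map $M$ of line~\eqref{mapcoproducts} is a natural bijection, which is the universal property characterising the coproduct. The data we have is a family $(\bigoplus_{s \in S_i} X_s^i)_{i \in I}$, the candidate coproduct object $\bigoplus_{a \in \sqcup_i S_i} A_a$, and the canonical injections $\inj_{S_{i_0}}$. Throughout, I will work with the reduced representatives of morphisms guaranteed by the earlier result on compatibility of composition with $\sim$, i.e., representatives in which every defined component morphism is nonzero.

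First, I would compute the composition $F \circ \inj_{S_{i_0}}$ for an arbitrary $F = (\alpha, \{f_{a,r}\}_{a \in \sqcup_i S_i}^{r \in \alpha(a)})$. Because $\inj_{S_{i_0}}$ has set map $\iota_{S_{i_0}}$ (simple inclusion on finite subsets) with identity components, the composition rule collapses each sum to a single term and directly yields the right-hand side of~\eqref{directsumrequirement}. This establishes the explicit formula I then use for injectivity and surjectivity of $M$.

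For injectivity: since $\bigsqcup_{i \in I} S_i$ is a disjoint union, any finite subset $A \subseteq \bigsqcup_i S_i$ decomposes uniquely as $A = \bigsqcup_{i \in I} (A \cap S_i)$ with all but finitely many pieces empty. Because $\alpha$ commutes with finite unions, its value on $A$ is determined by the values $\alpha(A \cap S_i) = \alpha|_{\text{f.s.}(S_i)}(A \cap S_i)$. Similarly, the indexed family $\{f_{a,r}\}$ is the disjoint union of its restrictions to each $S_i$. Hence the reduced form of $F$ is reconstructed uniquely from the family $(F \circ \inj_{S_i})_{i \in I}$. For surjectivity: given any tuple $\big(\beta_i, \{h_{s,r}^i\}\big)_{i \in I}$ in the codomain of $M$, I would define $\alpha$ on finite subsets of $\bigsqcup_i S_i$ by $\alpha(A) = \bigcup_{i} \beta_i(A \cap S_i)$ (a finite union since only finitely many $A \cap S_i$ are nonempty), which commutes with unions by construction, and assemble $\{f_{a,r}\}$ by setting $f_{a,r} = h_{a,r}^{i}$ when $a \in S_i$. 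The explicit formula~\eqref{directsumrequirement} then shows $M$ sends this preimage to the given tuple.

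Finally, for functoriality of $M$ in $\bigoplus_{r \in R} Z_r$, I would postcompose with an arbitrary $G = (\gamma, \{g_{r,u}\}) \in \Hom_{\dCCC}(\bigoplus_{r \in R} Z_r, \bigoplus_{u \in U} C_u)$ and expand $M(G \circ F)$ using the composition rule, obtaining the expression in line~\eqref{functorialitycoproduct}. The two identities $\gamma \circ \alpha|_{\text{f.s.}(S_{i_0})} = (\gamma \circ \alpha)|_{\text{f.s.}(S_{i_0})}$ and $\alpha|_{\text{f.s.}(S_{i_0})}(s_{i_0}) = \alpha(s_{i_0})$ for $s_{i_0} \in S_{i_0}$ then match this with $G \circ M(F)$ component-by-component. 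The main obstacle is purely bookkeeping: one must consistently use the reduced-representative convention so that injectivity is unambiguous, and verify that the set map $\alpha$ reconstructed in the surjectivity step genuinely commutes with unions — both reduce to the disjointness of $\bigsqcup_{i \in I} S_i$, so no deep argument is needed beyond careful tracking of indices.
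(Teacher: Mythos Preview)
Your proposal is correct and follows essentially the same approach as the paper's proof: both compute $F \circ \inj_{S_{i_0}}$ explicitly to obtain~\eqref{directsumrequirement}, use disjointness of $\bigsqcup_i S_i$ together with reduced representatives for injectivity and surjectivity, and verify functoriality via the two restriction identities for $\gamma \circ \alpha$. Your treatment of surjectivity is slightly more explicit (writing out the formula $\alpha(A) = \bigcup_i \beta_i(A \cap S_i)$ and checking it commutes with unions), but this is exactly what the paper means by ``can be combined into a morphism \ldots\ because the union is disjoint.''
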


\begin{definition} \label{embeddingfun} Define an inclusion functor $\mathcal{I} : \CCC \rightarrow \dCCC$ as follows:
\begin{align*}
X & \longmapsto \bigoplus_{0 \in \{0\}} X_0 \quad \text{where } X_0 = X \\
[X \overset{f}{\rightarrow} Y] & \longmapsto \left(\Id_{\{0\}},\{f_{0,0} = f\}_{0 \in \{0\}}^{0 \in \{0\}}\right)
\end{align*}
\end{definition}

\begin{proposition}
The inclusion functor $\mathcal{I}$ is fully faithful and $\mathbb{F}$-linear. In other words, there are natural $\mathbb{F}$-linear bijections 
$$ \Hom_{\dCCC}\big(\mathcal{I}(X),\mathcal{I}(Y)\big) = \Hom_\CCC(X,Y),$$ 
Moreover, every $\bigoplus_{s \in S} X_s \in \ob(\dCCC)$ is a direct sum (in $\dCCC$) of its terms $\mathcal{I}(X_s) \in \ob(\dCCC)$.
\begin{proof} Let $X$ and $Y$ be objects of $\CCC$. By Definition \ref{embeddingfun}, the sets associated to both objects $\mathcal{I}(X)$ and $\mathcal{I}(Y)$ is $\{0\}$ which has only one element. Without loss of generality, the set map of an arbitrary morphism $\Hom\dCCC\big(\mathcal{I}(X),\mathcal{I}(Y)\big)$ can be taken to send $\{0\}$ to $\{0\}$. Thus, an arbitrary morphism of this set is equivalent to one of the form $\left(\Id_{\{0\}},\{f_{0,0} = f\}_{0 \in \{0\}}^{0 \in \{0\}}\right)$ where $f : X = X_0 \rightarrow Y_0 = Y$ is a morphism in $\CCC$. It follows that the mapping 
$$ [X \overset{f}{\rightarrow} Y] \quad \longmapsto \quad \left(\Id_{\{0\}},\{f_{0,0} = f\}_{0 \in \{0\}}^{0 \in \{0\}}\right) \; ,$$
is bijective and $\mathbb{F}$-linear. It is obvious that it also preserves compositions, so $\mathcal{I}$ is a fully faithful $\mathbb{F}$-linear functor. Finally, an arbitrary object $\bigoplus_{s \in S} X_s \in \ob(\dCCC)$ is a direct sum of the objects $\{\mathcal{I}(X_s)\}_{s \in S}$ by Proposition~\ref{coproductsproof}.
\end{proof}
\end{proposition}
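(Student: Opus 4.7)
The plan is to first establish the natural $\mathbb{F}$-linear bijection on $\Hom$-sets, then deduce full faithfulness and preservation of composition, and finally identify each $\bigoplus_{s \in S} X_s$ as a coproduct of its summands $\mathcal{I}(X_s)$ via Proposition~\ref{coproductsproof}.

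For the first step, I would analyze morphisms between $\mathcal{I}(X)$ and $\mathcal{I}(Y)$ using the fact that both objects have indexing set $\{0\}$. The collection of finite subsets $\text{f.s.}(\{0\})$ contains only $\emptyset$ and $\{0\}$, so any set map $\alpha$ commuting with unions must fix $\emptyset$ and send $\{0\}$ to either $\{0\}$ or $\emptyset$. In the latter case, no component morphisms are defined and the morphism is $\sim$-equivalent to $\bigl(\Id_{\{0\}},\{0_{0,0}\}\bigr)$ by relation (1) of the definition of $\sim$. Thus every morphism in $\Hom_{\dCCC}\bigl(\mathcal{I}(X),\mathcal{I}(Y)\bigr)$ admits a unique reduced representative of the form $\bigl(\Id_{\{0\}},\{f_{0,0}\}_{0 \in \{0\}}^{0 \in \{0\}}\bigr)$ with $f_{0,0} \in \Hom_\CCC(X,Y)$, giving the desired bijection $f \leftrightarrow \bigl(\Id_{\{0\}},\{f_{0,0}=f\}\bigr)$.

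For the second step, $\mathbb{F}$-linearity of this bijection follows directly from Definition~\ref{addition}: when both operands share the set map $\Id_{\{0\}}$, both addition and scalar multiplication act purely on the single component morphism $f_{0,0}$, matching the corresponding operations in $\Hom_\CCC(X,Y)$. Preservation of composition follows from the composition rule in $\dCCC$: composing two morphisms with set map $\Id_{\{0\}}$ produces a morphism with set map $\Id_{\{0\}}$ whose single component is the $\CCC$-composition of components. Combined with the manifest preservation of identities from Definition~\ref{embeddingfun}, this establishes that $\mathcal{I}$ is a fully faithful $\mathbb{F}$-linear functor.

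For the final claim, I would apply Definition~\ref{arbitrarycoprods} to the family $\bigl\{\mathcal{I}(X_s)\bigr\}_{s \in S}$, whose indexing sets are singletons $\{0\}_s$. The coproduct is formed on the disjoint union $\bigsqcup_{s \in S} \{0\}_s$, which is canonically bijective with $S$ via $(0,s) \leftrightarrow s$. Under this identification, the coproduct object is precisely $\bigoplus_{s \in S} X_s$, and by Proposition~\ref{coproductsproof} this is a genuine coproduct in $\dCCC$. There is no substantial obstacle here; the only subtlety is ensuring the reduction of morphisms modulo $\sim$ is handled correctly and the canonical identification of indexing sets is made explicit, both of which are routine given the preceding development.
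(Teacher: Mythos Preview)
Your proposal is correct and follows essentially the same approach as the paper's proof: both reduce an arbitrary morphism in $\Hom_{\dCCC}\bigl(\mathcal{I}(X),\mathcal{I}(Y)\bigr)$ to a representative with set map $\Id_{\{0\}}$, read off the bijection with $\Hom_\CCC(X,Y)$, and invoke Proposition~\ref{coproductsproof} for the coproduct claim. Your version is simply more explicit about the two possible set maps on $\text{f.s.}(\{0\})$ and about the identification $\bigsqcup_{s \in S}\{0\}_s \cong S$, details the paper suppresses with a ``without loss of generality''.
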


\subsection{Monoidal structure on $\dCCC$}
In this subsection, let $\CCC$ denote a $\mathbb{F}$-linear monoidal category with tensor product $\otimes$, associativity isomorphisms $\{a_{X,Y,Z}\}_{X,Y,Z \in \ob(\CCC)}$ and unit $(\mathds{1},l,r)$. 
The goal of this subsection is to define a natural monoidal structure on $\dCCC$.
%
%
%
%
%


\begin{definition} \label{tensorsum} Define a tensor product on $\otimes_{\dCCC} : \; \dCCC \times \dCCC \rightarrow \dCCC$ as follows:
\begin{itemize}
\item it sends a pair of objects $\left(\bigoplus_{s \in S} X_s \bigoplus_{t \in T} Y_t\right)$ to the object $\bigoplus_{(s,t) \in S \times T} (X_s \otimes_\CCC Y_t) ; $
\item it sends a pair of morphisms $\Big(\alpha,\{f_{s,\tilde{s}}\}_{s \in S}^{\tilde{s} \in \alpha(s)}\Big),\Big(\beta,\{g_{t,\tilde{t}}\}_{t \in T}^{\tilde{t} \in \beta(t)}\Big)$ to the morphism
\begin{align} \label{tensormorphismsmap}
&\alpha \otimes \beta \; : \big\{(s_i,t_i)\big\}_{i=1}^n \longmapsto \bigcup_{i=1}^{n} \big(\alpha(s_i) \times \beta(t_i)\big) \\
&\Big\{f_{s,\tilde{s}} \otimes g_{t,\tilde{t}} \Big\}_{(s,t) \in S \times T}^{(\tilde{s},\tilde{t}) \in \alpha(s) \times\beta(t) = \big(\alpha \otimes \beta\big)(s,t)}
\end{align}
\end{itemize}
Note that the rule $\emptyset \times A = \emptyset$ for any set $A$ is assumed in the above. 
\end{definition}

\begin{definition} \label{assosum} Let $\bigoplus_{s \in S} X_s, \bigoplus_{t \in T} Y_t, \bigoplus_{r \in R} Z_r \in \ob(\dCCC)$. Define associativity morphisms for the tensor product $\otimes_{\dCCC}$ as follows:
\begin{equation*}
a_{(\oplus_S X_s, \oplus_T Y_t, \oplus_R Z_r)}^{\dCCC} = \Bigg(\alpha : \Big\{\big((s_i,t_i),r_i)\big)\Big\}_{i=1}^n \mapsto \Big\{\big(s_i,(t_i,r_i)\big)\Big\}_{i=1}^n,\Big\{a_{X_s,Y_t,Z_r}\Big\}_{\big((s,t),r)\big) \in (S \times T) \times R}\Bigg)
\end{equation*}
\end{definition}

\begin{definition} \label{unitsum} Define a unit object $\mathds{1}_{\dCCC} = \mathcal{I}(\mathds{1}) = \bigoplus_{0 \in \{0\}} \mathds{1}_0$ where $\mathds{1}_0 = \mathds{1} \in \ob(\CCC)$ and a left unit $l^{\C_{\oplus}}_{-}$ by
$$ l^{\dCCC}_{\bigoplus_{s \in S} X_s} = \Big(\alpha : \{(0,s_i)\}_{i=1}^n \mapsto \{s_i\}_{i=1}^n,\big\{l_{X_s}\big\}_{(0,s) \in \{0\}\times S}^{s \in \alpha(0,s)}\Big) \in \Hom_{\dCCC}\left(\mathds{1}_{\dCCC} \otimes \bigoplus_{s \in S} X_s,\bigoplus_{s \in S} X_s\; \right). $$
Right units are defined similarly.
\end{definition}

\begin{proposition} The elements of Definitions~\ref{tensorsum},~\ref{assosum},~\ref{unitsum} define a monoidal structure on $\dCCC$. In particular:
\begin{itemize}
\item $\otimes_{\dCCC}$ is a bifunctor $\dCCC \times \dCCC \rightarrow \dCCC$;
\item $(\mathds{1}_{\dCCC},l^{\dCCC}_{-},r^{\dCCC}_{-})$ is a unit for this tensor product;
\item The isomorphisms $a_{-,-,-}^{\dCCC}$ are well defined and trinatural;
\item The pentagon and triangle axioms are satisfied.
\end{itemize}
\begin{proof} 
Let's first show that $\otimes_{\dCCC}$ is a bifunctor. Note that the set map~\eqref{tensormorphismsmap} commutes with unions since both $\alpha$ and $\beta$ do. A less obvious fact is that the effect of $\otimes_{\dCCC}$ on morphisms is compatible with $\sim$. 
Fix $\bigoplus_{\ell \in L} D_\ell \in \ob(\dCCC)$ and consider the operation 
\begin{equation} \label{tensorproductslot}
\left( \bigoplus_{\ell \in L} D_\ell \right) \otimes_{\dCCC} - \; .
\end{equation}

Let $\Big(\beta,\{g_{s,t}\}_{s \in S}^{t \in \beta(s)}\Big) \sim \Big(\tilde{\beta}, \{\tilde{g}_{s,t}\}_{s \in S}^{t \in \tilde{\beta}(s)}\Big) \in \Hom_{\dCCC}\big(\bigoplus_{s \in S} X_s,\bigoplus_{t \in T} Y_t\big)$ and consider their respective images under~\eqref{tensorproductslot}:
\begin{align}
\bigg(\Id_{\text{f.s.}(L)} \otimes \beta \; : \big\{(\ell_i,s_i)\big\}_{i=1}^n \mapsto \bigcup_{i=1}^{n} \big(\{\ell_i\} \times \beta(s_i)\big) , \Big\{ \Id_{X_\ell} \otimes g_{s,t} \Big\}_{(\ell,s) \in L \times S}^{(\ell,t) \in \{\ell\} \times \beta(s)}\bigg) , \label{tensormorphism1} \\
\bigg(\Id_{\text{f.s.}(L)} \otimes \tilde{\beta} \; : \big\{(\ell_i,s_i)\big\}_{i=1}^n \mapsto \bigcup_{i=1}^{n} \big(\{\ell_i\} \times \tilde{\beta}(s_i)\big) , \Big\{ \Id_{X_\ell} \otimes \tilde{g}_{s,t} \Big\}_{(\ell,s) \in L \times S}^{(\ell,t) \in \{\ell\} \times \tilde{\beta}(s)}\bigg) \; . \label{tensormorphism2}
\end{align}
Let $(\ell,s)\in L \times S$. Then
\begin{align*}
(x,y) \in \big(\Id_{\text{f.s.}(L)} \otimes \beta\big) \big((\ell,s)\big) \backslash \big(\Id_{\text{f.s.}(L)} \otimes \tilde{\beta}\big)\big((\ell,s)\big) \quad &\Leftrightarrow \quad x = \ell \text{ and } y \in \beta(s) \backslash \; \tilde{\beta}(s) \; ; \\
(x,y) \in \big(\Id_{\text{f.s.}(L)} \otimes \beta\big)\big((\ell,s)\big) \cap \big(\Id_{\text{f.s.}(L)} \otimes \tilde{\beta}\big)\big((\ell,s)\big) \quad &\Leftrightarrow \quad x = \ell \text{ and } y \in \beta(s) \cap \; \tilde{\beta}(s) \; ; \\
(x,y) \in \big(\Id_{\text{f.s.}(L)} \otimes \tilde{\beta}\big)\big((\ell,s)\big) \backslash \big(\Id_{\text{f.s.}(L)} \otimes \beta\big)\big((\ell,s)\big) \quad &\Leftrightarrow \quad x = \ell \text{ and } y \in \tilde{\beta}(s) \backslash \; \beta(s) \; .
\end{align*}
It follows immediately that \eqref{tensormorphism1} $\sim$ \eqref{tensormorphism2}. Additionally, we see that \eqref{tensorproductslot} sends identity morphisms to identity morphisms and zero morphisms to zero morphisms. The last thing to check in order to prove that \eqref{tensorproductslot} is a functor is that it preserves compositions in $\dCCC$. Let 
$$ \Big(\beta,\{g_{t,r}\}_{t \in T}^{r \in \beta(t)}\Big) : \bigoplus_{t \in T} Y_t \rightarrow \bigoplus_{r \in R} Z_r \quad \text{ and } \quad \Big(\alpha,\{f_{s,t}\}_{s \in S}^{t \in \alpha(s)}\Big) : \bigoplus_{s \in S} X_s \rightarrow \bigoplus_{t \in T} Y_t \; . $$
Since $\alpha$ and $\beta$ commute with unions, the equality $\Id_{\text{f.s.}(L)} \otimes (\beta \circ \alpha) = (\Id_{\text{f.s.}(L)} \otimes \beta) \circ (\Id_{\text{f.s.}(L)} \otimes \alpha)$ holds.
Let $(\ell,s) \in L \times S$ and $(\ell,r) \in \{\ell\} \times \big(\beta \circ \alpha\big)(s)$. The component morphism $(\ell,s) \mapsto (\ell,r)$ resulting from the application of~\eqref{tensorproductslot} to the composition $\Big(\beta,\{g_{t,r}\}_{t \in T}^{r \in \beta(t)}\Big) \circ \Big(\alpha,\{f_{s,t}\}_{s \in S}^{t \in \alpha(s)}\Big)$ is
\begin{align*}
\sum_{\substack{(\ell,t) \in \{\ell\} \times \alpha(s) \\ \text{ s.t. } (\ell,r) \in \{\ell\} \times \beta(t)}} (\Id_{D_\ell} \otimes g_{t,r}) \circ (\Id_{D_\ell} \otimes f_{s,t}) \; ,
\end{align*}
which is precisely the $(\ell,s) \mapsto (\ell,r)$ component morphism of the composition
$$ \bigg((\Id_{\bigoplus D_\ell}) \otimes_{\dCCC} \Big(\beta,\{g_{t,r}\}_{t \in T}^{r \in \beta(t)}\Big)\bigg) \circ \bigg((\Id_{\bigoplus D_\ell}) \otimes_{\dCCC}\Big(\alpha,\{f_{s,t}\}_{s \in S}^{t \in \alpha(s)}\Big)\bigg) \; . $$
We conclude that~\eqref{tensorproductslot} is indeed a functor and similarly, so is $- \otimes_{\dCCC}  \left( \bigoplus_{\ell \in L} D_\ell \right)$ and ultimately, that $\otimes_{\dCCC}$ is a bifunctor. The triple $(\mathds{1}_{\dCCC},l^{\dCCC}, r^{\dCCC})$ is a unit for this tensor product since the natural bijective map $\{0\} \times A \cong A$ for any set $A$ make the triangle axiom requirement in $\dCCC$ reduces to triangle axiom requirements in $\C$.\\

To prove that $a_{-,-,-}^{\dCCC}$ is an associativity constraint for $\dCCC$, note the obvious map $A \times (B \times C) \cong (A \times B) \times C$ is bijective and natural in every argument, so the pentagon axiom requirement in $\dCCC$ reduces to pentagon axiom requirements in $\C$.
\end{proof}
\end{proposition}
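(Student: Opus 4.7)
The plan is to verify the four bullet points in order, in each case reducing the assertion to the corresponding property holding componentwise in $\CCC$. Two preliminary observations make this feasible. First, every morphism in $\dCCC$ has a canonical reduced representative in which all named component morphisms $f_{s,t}$ are non-zero; this was established in the proof of the first proposition on $\dCCC$. Second, the canonical set bijections $\{0\}\times S \cong S$, $(S\times T)\times R \cong S\times (T\times R)$, etc.\ commute with the ``take finite subsets'' operation and with unions, so all set-level data behaves as one expects.

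For bifunctoriality of $\otimes_\dCCC$, I would first verify well-definedness on equivalence classes of morphisms. Fixing one slot, say $\big(\bigoplus_{\ell\in L} D_\ell\big)\otimes_\dCCC -$, and given an equivalence $\big(\beta,\{g_{s,t}\}\big) \sim \big(\tilde\beta,\{\tilde g_{s,t}\}\big)$, the three defining conditions of $\sim$ transfer unchanged to the tensored morphisms because the set map $\Id_{\text{f.s.}(L)} \otimes \beta$ on a fixed pair $(\ell,s)$ produces exactly $\{\ell\}\times \beta(s)$, and $\Id_{D_\ell}\otimes 0_{s,t} = 0$ in $\CCC$. Commutation with unions is inherited from $\alpha$ and $\beta$, and preservation of identities and zero morphisms is immediate. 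For preservation of composition, the identity $\Id_{\text{f.s.}(L)}\otimes (\beta\circ\alpha) = (\Id_{\text{f.s.}(L)}\otimes \beta)\circ (\Id_{\text{f.s.}(L)}\otimes\alpha)$ holds because $\alpha,\beta$ commute with unions, and each component morphism reduces to a sum of terms of the form $(\Id_{D_\ell}\otimes g_{t,r})\circ (\Id_{D_\ell}\otimes f_{s,t})$, which matches the componentwise data of the tensored composition by bifunctoriality of $\otimes_\CCC$. A symmetric argument handles the other slot, which gives bifunctoriality of $\otimes_\dCCC$.

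For the unit axiom, $l^\dCCC$ and $r^\dCCC$ are isomorphisms because their componentwise inverses are assembled from $l_{X_s}^{-1}$ and $r_{X_s}^{-1}$ in $\CCC$; naturality in a morphism $(\alpha,\{f_{s,t}\})$ is naturality of $l$ and $r$ in $\CCC$ applied term by term. For the associativity constraint, $a^\dCCC_{-,-,-}$ is well defined because the canonical set bijection $(S\times T)\times R\cong S\times (T\times R)$ is bijective, natural, and union-preserving, and each component $a_{X_s,Y_t,Z_r}$ is an isomorphism in $\CCC$; trinaturality reduces termwise to trinaturality of $a$ in $\CCC$. The pentagon and triangle axioms in $\dCCC$ then become equalities of morphisms whose set maps agree via the canonical iterated bijection of Cartesian products and whose componentwise data agree by the pentagon and triangle axioms in $\CCC$.

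The main obstacle is not any single hard step but consistent bookkeeping: one must verify repeatedly that the set-map data behaves compatibly with Cartesian products and disjoint unions, and that the component morphisms assemble correctly under the equivalence $\sim$. The cleanest way to do this is, at each step, to pass to the reduced representatives, at which point the only thing to check is an equality of finite sums of $\CCC$-morphisms indexed by the relevant finite subsets of $S\times T$ (resp.\ $S\times T\times R$), and the monoidal axioms in $\CCC$ finish the argument termwise.
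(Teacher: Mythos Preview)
Your proposal is correct and follows essentially the same approach as the paper: fix one tensor slot to check compatibility with $\sim$ and preservation of composition componentwise, then reduce the unit, associativity, pentagon, and triangle axioms to their $\CCC$-counterparts via the canonical set bijections $\{0\}\times S\cong S$ and $(S\times T)\times R\cong S\times(T\times R)$. If anything, you are slightly more explicit than the paper about naturality of $l^{\dCCC}$, $r^{\dCCC}$, and $a^{\dCCC}$, but the structure of the argument is the same.
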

The proof of the following proposition is straightforward.

\begin{proposition} The inclusion functor $\mathcal{I} : \CCC \rightarrow \dCCC$ from Definition \ref{embeddingfun} is monoidal.
\end{proposition}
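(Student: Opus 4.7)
The plan is to exhibit the required monoidal structure data $(\phi_0, \phi_{X,Y})$ for $\mathcal{I}$ and then check the coherence axioms, with each verification collapsing to the corresponding statement in $\CCC$ since every object in the image of $\mathcal{I}$ is indexed by the singleton $\{0\}$.

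First I would specify the unit comparison $\phi_0 : \mathcal{I}(\mathds{1}) \to \mathds{1}_{\dCCC}$. By Definitions~\ref{embeddingfun} and~\ref{unitsum}, both objects are literally the same expression $\bigoplus_{0 \in \{0\}} \mathds{1}_0$, so I take $\phi_0$ to be the identity morphism. Second, for each pair $X, Y \in \ob(\CCC)$, I would define the tensorator $\phi_{X,Y} : \mathcal{I}(X) \otimes_{\dCCC} \mathcal{I}(Y) \to \mathcal{I}(X \otimes_\CCC Y)$. Unpacking Definitions~\ref{tensorsum} and~\ref{embeddingfun}, the domain is indexed by $\{0\} \times \{0\}$ and the codomain by $\{0\}$, each with a single summand equal to $X \otimes_\CCC Y$. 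I set $\phi_{X,Y} = \bigl(\gamma, \{\Id_{X \otimes_\CCC Y}\}\bigr)$ where $\gamma$ is the canonical bijection of singletons, which is clearly invertible. Naturality in both arguments amounts to checking, for $f : X \to X'$ and $g : Y \to Y'$ in $\CCC$, that the two compositions in the naturality square reduce to the same single $\CCC$-component $(f \otimes_\CCC g) \circ \Id_{X \otimes Y} = \Id_{X' \otimes Y'} \circ (f \otimes_\CCC g)$.

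Third, I would verify the pentagon and the two unit triangles for $(\mathcal{I}, \phi_0, \phi)$. Each axiom is an equality of morphisms in $\dCCC$ between objects whose index sets are singletons (or finite products thereof), so both sides reduce to a single component morphism in $\CCC$. Expanding Definition~\ref{assosum} one sees that the associator $a^{\dCCC}_{\mathcal{I}(X), \mathcal{I}(Y), \mathcal{I}(Z)}$ has a unique component equal to $a_{X,Y,Z}$, and expanding Definition~\ref{unitsum} one sees that $l^{\dCCC}_{\mathcal{I}(X)}$ and $r^{\dCCC}_{\mathcal{I}(X)}$ have unique components $l_X$ and $r_X$. Thus the pentagon and triangle axioms for $\mathcal{I}$ collapse term-by-term to the pentagon and triangle axioms already known for $\CCC$.

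The main obstacle here is purely bookkeeping rather than any conceptual difficulty: one must track the canonical bijections $\{0\} \cong \{0\} \times \{0\}$ and $\{0\} \cong (\{0\} \times \{0\}) \times \{0\} \cong \{0\} \times (\{0\} \times \{0\})$ carefully so that the set-map components of the morphisms in the pentagon and triangles match up, and confirm that the equivalence relation $\sim$ of Definition~\ref{sumcomp} does not obstruct the identification of the two sides. This is indeed why the authors can state that the proof is straightforward.
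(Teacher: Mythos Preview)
Your proposal is correct and is precisely the straightforward verification the paper alludes to (the paper gives no proof beyond that remark). One small terminological note: the associativity coherence condition for a monoidal functor is usually called the hexagon (relating $\phi$, $a^{\dCCC}$, and $\mathcal{I}(a)$) rather than the pentagon, but your reduction of each coherence diagram to its unique $\CCC$-component via the singleton indexing is exactly right.
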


\subsection{Braiding and twist on $\dCCC$}
In this subsection, let $\CCC$ denote a braided, $\mathbb{F}$-linear and monoidal category with braidings $\{c_{X,Y}\}_{X,Y \in \ob(\CCC)}$ and possibly with twists $\{\theta_X\}_{X \in \ob(\CCC)}$ satisfying the balancing axiom.

\begin{definition} \label{braidingcompletion} Let $\bigoplus_{s \in S} X_s, \bigoplus_{t \in T} Y_t \in \ob(\dCCC)$. Define braiding isomorphisms in $\dCCC$ as follows:
\begin{equation*}
c_{(\oplus_S X_s, \oplus_T Y_t)}^{\dCCC} = \Bigg(\alpha : \Big\{(s_i,t_i)\Big\}_{i=1}^n \mapsto \Big\{(t_i,s_i)\Big\}_{i=1}^n,\Big\{c_{X_s,Y_t}\Big\}_{(s,t) \in S \times T} \Bigg)
\end{equation*}
\end{definition}

\begin{definition} \label{twistcompletion} If $\CCC$ has twists, let $\bigoplus_{s \in S} X_s \in \ob(\dCCC)$ and define twist isomorphisms in $\dCCC$ as follows:
\begin{equation*}
\theta_{(\bigoplus_{s \in S} X_s)}^{\dCCC} = \Bigg( \Id_{\text{f.s.}(S)},\Big\{\theta_{X_s}\Big\}_{s \in S} \Bigg)
\end{equation*}
\end{definition}

\begin{proposition} The braiding and twist of Definitions \ref{braidingcompletion} and \ref{twistcompletion} makes $\dCCC$ a braided $\mathbb{F}$-linear monoidal category (with twists if $\CCC$ has twists). In particular:
\begin{itemize}
\item the braiding $c^{\dCCC}_{-,-}$ and twist $\theta^{\dCCC}_{-}$ are natural isomorphisms in $\dCCC$ (in every argument);
\item the hexagon and balancing axioms are  satisfied.
\end{itemize}
\begin{proof} Naturality of braiding and twist isomorphisms follow from naturality of the associated set maps and of the component maps they are composed of. The hexagon and balancing axioms requirements in $\dCCC$ reduce to the analogues in $\CCC$ for every component.  
\end{proof}
\end{proposition}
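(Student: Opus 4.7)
The plan is to reduce each part of the claim to a summand-wise verification in $\CCC$, exploiting the fact that every set map appearing in Definitions~\ref{braidingcompletion} and~\ref{twistcompletion} is either the identity on finite subsets or the canonical swap $(s,t) \mapsto (t,s)$ on Cartesian products, while every component morphism is precisely a corresponding braiding or twist in $\CCC$. Since the preceding propositions have already established that $\dCCC$ is an $\mathbb{F}$-linear monoidal category and that compositions, tensor products, associators and units reduce componentwise to their $\CCC$ analogues, there is essentially nothing left to do beyond identifying these components in each diagram.

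First, I would verify that $c^{\dCCC}_{-,-}$ and $\theta^{\dCCC}_{-}$ are isomorphisms. For the braiding, the candidate inverse is obtained by reversing the swap set map and using components $c_{X_s,Y_t}^{-1}$; composing in either order yields the identity set map on the relevant finite subsets of $S \times T$ together with $c_{X_s,Y_t}^{-1} \circ c_{X_s,Y_t} = \Id_{X_s \otimes Y_t}$ on each summand. For $\theta^{\dCCC}$, the inverse has set map $\Id_{\text{f.s.}(S)}$ and components $\theta_{X_s}^{-1}$.

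Next, I would establish naturality. For $\theta^{\dCCC}$ this is immediate: given a morphism $\bigl(\alpha,\{f_{s,s'}\}\bigr)$, both sides of the naturality square have set map $\alpha$ (since $\Id_{\text{f.s.}}$ acts trivially under composition), and each defined component reduces to $\theta_{X'_{s'}} \circ f_{s,s'} = f_{s,s'} \circ \theta_{X_s}$ in $\CCC$. For $c^{\dCCC}$ in each slot, I would tensor a pair of morphisms $\bigl(\alpha,\{f_{s,s'}\}\bigr)$ and $\bigl(\beta,\{g_{t,t'}\}\bigr)$, then observe that the swap set map commutes with the Cartesian product $\alpha \otimes \beta$, so that both compositions in the naturality square share the same set map, and the component equation reduces to $c_{X'_{s'},Y'_{t'}} \circ (f_{s,s'} \otimes g_{t,t'}) = (g_{t,t'} \otimes f_{s,s'}) \circ c_{X_s,Y_t}$, which holds by naturality in $\CCC$.

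Finally, the hexagon and balancing axioms. At the set-map level, both sides of each diagram reduce to the canonical associativity and swap bijections among iterated Cartesian products of finite index sets; these agree by direct inspection. At the component level, each index triple $(s,t,r) \in S \times T \times R$ yields exactly the hexagon for $(X_s,Y_t,Z_r)$ in $\CCC$, and each pair $(s,t)$ yields the balancing diagram for $(X_s,Y_t)$; both hold by hypothesis, and the analogue hexagon for $a^{-1}$ is handled identically. The main obstacle is not conceptual but notational: one must carefully check that the set maps on both sides of each equation coincide before identifying components, and remember that morphisms in $\dCCC$ live in equivalence classes under $\sim$, so one is always free to pick convenient representatives in which all relevant components are nonzero before reducing to the corresponding statement in $\CCC$.
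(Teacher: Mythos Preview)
Your proposal is correct and follows essentially the same approach as the paper: both reduce the naturality, hexagon, and balancing requirements to componentwise verification in $\CCC$ after checking that the underlying set maps agree. Your version is simply a more explicit elaboration of the paper's two-sentence sketch.
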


The following Proposition is also straightforward:
\begin{proposition} The inclusion functor $\mathcal{I} : \CCC \rightarrow \dCCC$ from Definition \ref{embeddingfun} is braided monoidal and preserves twist.
\end{proposition}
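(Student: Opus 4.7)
The plan is to unwind the definitions and observe that in each case both sides of the required coherence diagram reduce to a single-component morphism in $\dCCC$, at which point the compatibility condition becomes a tautology inherited from $\CCC$. Concretely, for any $X, Y \in \ob(\CCC)$, the object $\mathcal{I}(X) \otimes_{\dCCC} \mathcal{I}(Y)$ is indexed by the singleton $\{0\} \times \{0\}$ with its lone summand being $X \otimes_\CCC Y$, and the canonical isomorphism $J_{X,Y} : \mathcal{I}(X) \otimes_{\dCCC} \mathcal{I}(Y) \to \mathcal{I}(X \otimes_\CCC Y)$ witnessing that $\mathcal{I}$ is monoidal is the equivalence class of $\big(\alpha : \{(0,0)\} \mapsto \{0\},\{\Id_{X \otimes Y}\}\big)$.

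For the braided monoidal claim, I would check that the square
\[
\begin{tikzcd}
\mathcal{I}(X) \otimes_{\dCCC} \mathcal{I}(Y) \arrow{r}{c^{\dCCC}_{\mathcal{I}(X),\mathcal{I}(Y)}} \arrow{d}[swap]{J_{X,Y}} & \mathcal{I}(Y) \otimes_{\dCCC} \mathcal{I}(X) \arrow{d}{J_{Y,X}} \\
\mathcal{I}(X \otimes_\CCC Y) \arrow{r}[swap]{\mathcal{I}(c_{X,Y})} & \mathcal{I}(Y \otimes_\CCC X)
\end{tikzcd}
\]
commutes. By Definition~\ref{braidingcompletion}, the top arrow is represented by the set map sending $\{(0,0)\} \mapsto \{(0,0)\}$ together with the single component morphism $c_{X,Y} \in \Hom_\CCC(X \otimes Y, Y \otimes X)$, while the bottom arrow is the equivalence class of $(\Id_{\{0\}},\{c_{X,Y}\})$. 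Composing with $J_{X,Y}$ and $J_{Y,X}$ simply reindexes the singleton set maps and leaves the component morphism untouched, so both paths around the square are represented by $c_{X,Y}$ on the unique component. For the twist, I compare $\mathcal{I}(\theta_X)$, which is the class of $(\Id_{\{0\}},\{\theta_X\})$, with $\theta^{\dCCC}_{\mathcal{I}(X)} = (\Id_{\text{f.s.}(\{0\})},\{\theta_{X_0}\})$ from Definition~\ref{twistcompletion}; these agree on the nose since $X_0 = X$.

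There is no real obstacle here: everything reduces to the observation that objects in the image of $\mathcal{I}$ are indexed by a one-element set, so the set-map data in every morphism of $\dCCC$ between such objects is forced, and the component data just transports the corresponding structure morphism of $\CCC$. The only care needed is in matching the coherence isomorphism $J$ on the two sides so that the comparison is literally an equality of representatives rather than merely of equivalence classes; this is immediate from the explicit form of $J_{X,Y}$ above.
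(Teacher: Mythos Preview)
Your proposal is correct and matches the paper's approach: the paper does not actually write out a proof, stating only that the proposition is straightforward, and your argument is precisely the routine unwinding of definitions that the paper leaves implicit. The key observation---that objects in the image of $\mathcal{I}$ are indexed by a singleton so all set-map data is forced and the component data simply transports the structure morphisms of $\CCC$---is exactly the content the paper suppresses.
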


With this setup, a braided $\mathbb{F}$-linear category $\CCC$ with twists can be replaced by $\dCCC$ where infinite direct sums are needed. We will make use of the framework $\dCCC$ in the following section.

\section{Constructing Lattice VOAs} \label{sec:application}

In this section we construct even lattice VOAs as simple current extensions in a category of modules for the Heisenberg VOA. As the simple current from which we build the lattice VOA has infinite order, we must work in the completion category $\dCCC$ in order to make use of the existing theory for simple current extensions of VOAs.  




%
\subsection{The Heisenberg and even lattice VOAs}

Let $\hat{\mathfrak{h}}$ denote the Heisenberg Lie algebra over $\CC$ with vector space basis $\{\kappa, b_n \; | \; n \in \mathbb{Z}\}$ and Lie bracket
\[ [b_n,b_m]=n\delta_{n+m,0} \, \kappa \qquad \text{ and } \qquad [\kappa,b_n]=0. \] 
Fix the triangular decomposition $\hat{\mathfrak{h}} = \Span_\CC\{b_n \; | \; n < 0\} \oplus (\CC.b_0 \oplus \CC.\kappa) \oplus \Span_\CC\{b_n \; | \; n > 0\}$ with Cartan subalgebra $(\CC.b_0 \oplus \CC.\kappa)$.
\begin{definition} Let $\lambda \in \CC$. Define the \textit{Fock space} $\F_{\lambda}$ to be the free $\hat{\mathfrak{h}}$-module generated by a highest weight vector $| \lambda \rangle$ of highest weight given by $b_0. | \lambda \rangle = \lambda | \lambda \rangle$ and $\kappa.| \lambda \rangle = | \lambda \rangle$. 
\end{definition}
Under the natural identification $\F_\lambda = \mathbb{C}[b_{n}]_{n < 0}.| \lambda \rangle$ as vector spaces, an element $b \in \F_\lambda$ is just a polynomial in the variables $\{b_n \; | \; n < 0\}$. The Lie algebra $\hat{\mathfrak{h}}$ acts on $b$ as follows:
\begin{align*}
b_0.b &= \lambda b ,      &      b_n.b &= b_nb \quad \qquad \; \; \text{ if $n<0$} , \\
\kappa.b &= b ,    &     b_n.b &= n \, \partial_{b_{-n}}(b) \quad \text{ if $n>0$}.
\end{align*}
Recall that $\F_0$ can be given a vertex operator algebra structure \cite[Chapter 2]{BF}:
\begin{definition} The \textit{Heisenberg vertex operator algebra} $\mathcal{H} = (\F_0,|0\rangle,Y,T,\omega)$ is the VOA defined by the following data:
\begin{itemize}
\item a $\mathbb{Z}_+$-gradation deg($b_{j_1} \dotsb b_{j_k})=-\sum_{i=1}^k j_i \, $;
\item a vacuum vector $|0\rangle \in \F_0 \,$;
\item vertex operators $Y(-,z)$ defined inductively by
\begin{equation*}
Y(b_{-1},z) = b(z) =	\sum\limits_{n \in \mathbb{Z}} b_nz^{-n-1} \quad \text{ and } \quad Y(b_{j_1} \dotsb b_{j_k},z) = \frac{:\partial_z^{-j_1-1}b(z) \dotsb \partial_z^{-j_k-1}b(z):}{(-j_1-1)! \dotsb (-j_k-1)!}
\end{equation*}
where $:X(z)Y(z):$ denotes the \textit{normally ordered product} of the fields $X(z)$ and $Y(z)$;
\item a translation operator $T$ defined inductively by $T( | 0 \rangle) = 0$ and $[T,b_i]=-i \, b_{i-1} \,$;
\item a conformal vector $\omega = b_{-1}^2 \in \F_0$ of central charge $1$.
\end{itemize}
\end{definition}


The category of modules for $\mathcal{H}$ on which $b_0$ acts semisimply is semisimple and its simple modules are the Fock spaces $\F_{\lambda}$. Recall that the skeleton of a category is the full subcategory containing precisely one representative of each isomorphism class of objects. 

\begin{definition}\label{skel} Let $\C$ be the skeleton of the full subcategory generated by the Fock spaces with index $\lambda \in \mathbb{R}$. 	
\end{definition}
By \cite[Theorem 2.3]{CKLR}, the category $\C$ is a vertex tensor category in the sense of Huang-Lepowsky \cite{HL}. It is also a rigid braided monoidal category with twist as follows:
\begin{itemize}
\item the tensor product on $\mathcal{C}$ is given by $\F_{\lambda_1} \otimes \F_{\lambda_2} = \F_{\lambda_1+\lambda_2}$;
\item the associativity constraint $a_{\F_{\lambda_1},\F_{\lambda_2},\F_{\lambda_3}}:\F_{\lambda_1+\lambda_2+\lambda_3} \to \F_{\lambda_1+\lambda_2+\lambda_3}$ is given by the identity;
\item the braiding is given by $c_{\F_{\lambda_1},\F_{\lambda_2}}=e^{\pi i \lambda_1 \lambda_2}Id_{\F_{\lambda_1+\lambda_2} }$;
\item the twist is given by $\theta_{\F_{\lambda}}=e^{\pi i \lambda^2}Id_{\F_{\lambda}}$. 
\end{itemize}
\begin{remark} Notice that any $\F_\lambda \in \ob(\C)$ is a simple current since $\F_\lambda \otimes \F_{-\lambda} = \F_0 = \mathcal{H}$. The tensor product of Fock spaces in $\C$ also indicates that the left and right duals of $\F_\lambda$ have to be $\F_{-\lambda}$. The corresponding evaluation and coevaluation morphisms can then be fixed in terms of scalars by Schur's Lemma. 
\end{remark}

Let $N \in \ZZ_{>0}$ and form the even lattice $L=\sqrt{2N}\mathbb{Z}$. By the Reconstruction Theorem, the vector space
\[ V_L:= \bigoplus\limits_{\lambda \in L} \F_{\lambda} \;.\]

can be given the structure of a VOA as outlined in \cite[Proposition 5.2.5]{BF}. By \cite{D}, the simple modules of $V_L$ are of the form
\[ \F_{L + x}:= \bigoplus\limits_{\lambda \in L} \F_{\lambda + x} \; ,\]

where $x \in L^*:= \frac{1}{\sqrt{2N}}\mathbb{Z}$. In fact, the isomorphism class of $\F_{L + x}$ only depends on the coset of $x \in L^*/L$. Consider the following complete set of representatives of isomorphism classes of simple modules for $V_L$: 
\begin{equation} \label{latticesimple}
\left\{ \F_{L + \frac{a}{\sqrt{2N}}} \right\}_{a=0}^{a=2N-1}
\end{equation}
The choice of representatives \eqref{latticesimple} amounts to choosing the section $s$ of the short exact sequence $0 \rightarrow L \rightarrow L^* \rightarrow L^*/L \rightarrow 0$ given by $s(\bar{x}) = \frac{a_x}{\sqrt{2N}}$ where $a_x \in \{0, \ldots , 2N-1\}$ is such that $x \equiv \frac{a_x}{\sqrt{2N}} \bmod L$. The 2-cocycle associated with this choice of section is:
\begin{align} 
k: \; L^*/&L \times L^*/L \longrightarrow L \label{cocyclesimplecurr} \\
&(\bar{x},\bar{y}) \mapsto \left\{ \begin{array}{cl} \sqrt{2N} & if \; s(\bar{x})+s(\bar{y}) \geq \sqrt{2N} \\ 0 & else \end{array}\right. \notag
\end{align}


\begin{proposition} The skeleton of the category of modules of $V_L$ is semisimple with simple objects \eqref{latticesimple}. It is a monoidal braided category with twist as follows:
\begin{itemize} 
\item the tensor product is given by $\F_{L + x} \otimes \F_{L + y} = \F_{L + x+y-k(x,y)}$;
\item the associativity constraint is given by $a_{\F_{L + x},\F_{L + y},\F_{L + z}}=(-1)^{x \cdot k(y,z)} \Id_{\F_{L + x+y+z-k(x,y)-k(x+y,z)}}$;
\item the braiding is given by $c_{\F_{L + x},\F_{L + y}} = e^{ \pi i xy} \Id_{\F_{L + x+y-k(x,y)}}$;
\item the twist is given by $\theta_{\F_{L + x}} = e^{2\pi i x^2} \Id_{\F_{L + x}}$.
\end{itemize}
\end{proposition}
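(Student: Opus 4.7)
The plan is to exhibit $V_L$ as a haploid commutative associative algebra object in the direct sum completion $\dCCC$ developed in Section~\ref{sec:category}, and then to transfer the computation from $\text{Mod }V_L$ to $\Rep^0 V_L$ via the equivalence of \cite{HKL,CKM}, which applies in the present infinite order setting thanks to the framework of $\dCCC$. The skeleton of $\Rep^0 V_L$ is then to be read off from the induction functor $\ind : \C \to \Rep V_L$ restricted to the braided tensor functor $\ind : \C^0 \to \Rep^0 V_L$ of Proposition~\ref{C0induction}.

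First, I would verify that $V_L = \bigoplus_{\lambda \in L} \F_\lambda$, equipped with multiplication $\mu$ assembled component-wise from the canonical identifications $\F_\lambda \otimes \F_\mu \cong \F_{\lambda+\mu}$ and unit $\iota : \mathcal{H} = \F_0 \hookrightarrow V_L$, is haploid, commutative, and associative as an algebra in $\dCCC$. Haploid-ness and associativity are immediate, the latter because the associator on Fock spaces is the identity. Commutativity reduces to checking that $c_{\F_\lambda,\F_\mu} = e^{\pi i \lambda\mu}\Id$ equals the identity on $L \times L$, which holds since $\lambda\mu = 2Nk\ell \in 2\ZZ$ for $\lambda, \mu \in \sqrt{2N}\ZZ$; the analogous condition $\theta_{\F_\lambda} = \Id$ for $\lambda \in L$, needed later to ensure $V_L \in \Rep^0 V_L$, holds because $\lambda^2 = 2Nk^2 \in 2\ZZ$.

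Next, I would compute inductions on Fock spaces: $\ind(\F_x) = V_L \otimes \F_x = \bigoplus_{\lambda \in L} \F_{\lambda+x} = \F_{L+x}$. The condition $\ind(\F_x) \in \Rep^0 V_L$ is triviality of the double braiding $c_{V_L,\F_x} \circ c_{\F_x,V_L}$, which component-wise reads $e^{2\pi i \lambda x} = 1$ for every $\lambda \in L$, i.e., $x \in L^*$. Each such induction is simple: any nonzero $V_L$-submodule meets some summand $\F_{\lambda_0 + x}$, which is $\mathcal{H}$-irreducible and hence entirely contained, after which the $V_L$-action spreads it to all other summands. Two inductions $\ind(\F_x)$ and $\ind(\F_y)$ are isomorphic iff the index sets $L+x$ and $L+y$ coincide, iff $x - y \in L$, producing exactly $|L^*/L| = 2N$ isomorphism classes and the list \eqref{latticesimple}. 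Completeness of this list, that every simple $V_L$-module is such an induction, follows from general principles on local modules of commutative algebras (\cite{KO,CKM}) together with the observation that any simple $V_L$-module contains an $\mathcal{H}$-simple Fock space summand whose index must lie in $L^*$.

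Finally, the braided monoidal structure transfers through $\ind$. Monoidality gives $\ind(\F_x) \otimes_{V_L} \ind(\F_y) \cong \ind(\F_{x+y})$, which is identified with the chosen representative $\F_{L + x+y-k(x,y)}$ by a shift of $k(x,y) \in L$. The braiding and twist pull back directly: $c^{\Rep^0 V_L}_{\F_{L+x},\F_{L+y}}$ is induced by $c_{\F_x,\F_y} = e^{\pi i xy}\Id$, and $\theta^{\Rep^0 V_L}_{\F_{L+x}}$ by $\theta_{\F_x} = e^{\pi i x^2}\Id$. The main technical obstacle is extracting the explicit associator $(-1)^{x \cdot k(y,z)}\Id$: since the associator in $\C$ is the identity on Fock spaces, the nontrivial sign must arise entirely from the two different ways of identifying $\ind(\F_x) \otimes_{V_L} \ind(\F_y) \otimes_{V_L} \ind(\F_z)$ with the common chosen representative $\F_{L + s(\bar x + \bar y + \bar z)}$. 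The two paths apply the cocycle shifts in different orders, namely $k(x,y)$ then $k(x+y,z)$ versus $k(y,z)$ then $k(x,y+z)$, and I would extract the resulting scalar by tracking the braiding of $\F_x$ past the simple current $\F_{k(y,z)}$, which evaluates to $e^{\pi i x \cdot k(y,z)} = (-1)^{x \cdot k(y,z)}$ since $x \cdot k(y,z) \in \ZZ$.
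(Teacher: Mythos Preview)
The paper does not actually prove this Proposition: it simply cites \cite{D,DL,LL} for the structure of $\text{Mod}\,V_L$ as a known result from the VOA literature. Your proposal instead derives the statement by computing $\Rep^0 V_L$ and invoking the braided equivalence $\text{Mod}\,V_L \cong \Rep^0 V_L$. That computation is precisely the content of the paper's Section~4.2 (Proposition~\ref{simpleobjectslatticealgebra}, Lemma~\ref{fmap}, and the final Theorem), which the authors present not as a proof of the Proposition but as an independent categorical recovery of the same data, to be checked against the cited result. So your argument is correct and its technical content coincides with what the paper does later; the difference is purely in logical framing. The paper treats the Proposition as input and the $\Rep^0 V_L$ calculation as verification, whereas you reverse the dependency and use the categorical side as the primary proof.

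Two small remarks. First, your associator sketch (``tracking the braiding of $\F_x$ past $\F_{k(y,z)}$'') is morally right but compressed; the paper's actual computation in the final Theorem tracks the scalars contributed by the maps $f^{x,y}$ of Lemma~\ref{fmap} along the two bracketings and arrives at $e^{\pi i(-k(x,y)\lambda_3 + x\cdot k(y,z))} = (-1)^{x\cdot k(y,z)}$, so the shift maps themselves, not just the identification with representatives, carry the relevant braiding factor. Second, your twist value $e^{\pi i x^2}$ agrees with the paper's final Theorem but not with the $e^{2\pi i x^2}$ in the Proposition as stated; this appears to be an internal inconsistency in the paper rather than an error on your part.
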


See \cite{D,DL,LL} for details.

\subsection{Constructing $V_L$ from simple currents}

The Fock spaces $\F_{\lambda} \in \ob(\C)$ are simple currents in both $\C$ and $\dCCC$ since $\F_{\lambda} \otimes \F_{-\lambda} = \F_0 = \mathcal{H}$. Now, for any $k \in \mathbb{Z}$, one has $\F_{\lambda}^k=\F_{k\lambda}$, so $\theta_{\F_{\lambda}^k}=e^{\pi i (k\lambda)^2} \Id_{\F_{\lambda}^k}$. Hence, if $\lambda =\sqrt{2N}m \in \sqrt{2N}\mathbb{Z}$ for some $N \in \mathbb{Z}_{>0}$, we have
\[ \theta_{\F_{\lambda}^k} = e^{\pi i k^2 \lambda^2} \Id_{\F_{\lambda}^k} = e^{\pi i k^2 2N m^2} \Id_{\F_{\lambda}^k} = \Id_{\F_{\lambda}^k}. \]

Fix $N \in \ZZ_{>0}$ and set $L = \sqrt{2N} \ZZ$ as in the previous subsection. By \cite[Theorem 3.12]{CKL}, the vector space
\[V_L = \bigoplus\limits_{n \in \mathbb{Z}} \F_{\lambda}^n \cong \bigoplus\limits_{n \in \mathbb{Z}} \F_{\sqrt{2N}n}\]

is a Vertex Operator Algebra. Because $\lambda$ is real, $\mathcal{C}$ has vertex tensor category structure in the sense of Huang-Lepowsky by \cite[Theorem 2.3]{CKLR}, and clearly $\mathcal{H}=\F_0$ is a subalgebra of $V_L$. By \cite[Theorem 3.13]{CKL} $V_L$ is also a simple commutative $\C_{\oplus}$-algebra object. For the remainder of this document, we will interpret $V_L$ as such. \\

Let $\mu$ and $u$ denote the multiplication and unit maps of the algebra object $V_L$. By Schur's Lemma and the fact that $\dCCC$ is skeletal (because $\C$ is skeletal), both maps $\mu$ and $u$ can be efficiently described in terms of collections of scalars as follows:
\begin{align} \label{multiplicationlattice}
\mu \quad &\sim \quad \Big(\big\{(\lambda_1,\lambda_2)\big\} \mapsto \{\lambda_1 + \lambda_2\}, \{\mu_{\lambda_1,\lambda_2}\}_{(\lambda_1,\lambda_2) \in L^2}\Big) \in \Hom_{\dCCC}(V_L \otimes_{\dCCC} V_L,V_L) \\
u \quad &\sim \quad \Big(\{0\} \mapsto \{0\},\{u_0\}_{0 \in \{0\}}\Big) \in \Hom_{\dCCC}(\F_0, V_L) \notag
\end{align}

where the scalars $\mu_{\lambda_1,\lambda_2}$ and $u_0$ correspond to the component maps $\mu|_{\F_{\lambda_1} \otimes \F_{\lambda_2}} \in \Hom_\CCC(\F_{\lambda_1} \otimes \F_{\lambda_2},\F_{\lambda_1+ \lambda_2})$ and $u|_{\F_0} \in \Hom_\C(\F_0,\F_0)$, respectively. Because, $V_L$ is a simple algebra object, the complex numbers $\mu_{\lambda_1,\lambda_2}$ have to be non-zero for all $\lambda_1,\lambda_2 \in L$. The basic properties of $\mu$ given by Definition \ref{algebraobjectdef} make the map
\begin{align*}
k^{\mu}: \; &L \times L \longrightarrow \mathbb{C}^{\times} \quad \; , \\
&(\lambda_1,\lambda_2) \mapsto \mu_{\lambda_1,\lambda_2}
\end{align*}
a \textit{normalized} 2-cocyle in the group cohomology set $\text{H}^2(L;\mathbb{C}^{\times})$\footnote{Here, $\CC^\times$ should be seen as a trivial $L$-module.} that also satisfies 
\begin{equation} \label{cocyclebraiding}
k^\mu(\lambda_1,\lambda_2) = \mu_{\lambda_1,\lambda_2} = e^{\pi i \lambda_1 \lambda_2}\mu_{\lambda_2,\lambda_1} = e^{\pi i \lambda_1 \lambda_2} \cdot k^\mu(\lambda_2,\lambda_1) \; .
\end{equation}
By \cite[Theorem 4.5]{DF}, the relation \eqref{cocyclebraiding} between $k^{\mu}$ and the braiding fixes the cohomology class of $k^\mu$. In particular, $k^\mu$ is in the same cohomology class as the trivial 2-cocycle $(\lambda_1,\lambda_2) \mapsto 1$. As constructing algebra objects with cohomologous normalised 2-cocycles satisfying \eqref{cocyclebraiding} is equivalent, we will henceforth assume that $k^{\mu}$ is the trivial cocycle so that $\mu_{\lambda_1,\lambda_2} = 1$ for all $\lambda_1,\lambda_2 \in L$.\\

Following \cite[Theorem 3.14]{CKL} and \cite[Theorem 3.65]{CKM}, we expect that the category of generalised modules for $V_L$ is braided equivalent to the category $\Rep^0 V_L$. However, since $\dCCC$ is semisimple, $\text{Rep}^0V_L$ is semisimple and is therefore equivalent to the category of ordinary (non-generalized) modules. That is,
\[ \text{Mod}V_L \; \cong \; \Rep^0V_L \; , \]
as braided monoidal categories. We will now construct the category $\text{Rep}^0V_L$ and compare its structure to the known module category of the VOA $V_L$.
%
%
\begin{proposition} \label{simpleobjectslatticealgebra}
The distinct isomorphism classes of simple objects in $\Rep^0 V_L$ are given by
\begin{align*}
\left\{ \mathscr{F}(\F_{\frac{x}{\sqrt{2N}}}) \; \big| \; x \in \{0,...,2N-1\} \right\}
\end{align*}

where $\ind: \dCCC \to \Rep V_L$ is the induction functor from Definition \ref{indfunctordef}.
\end{proposition}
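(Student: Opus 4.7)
The plan is to parameterize simples in $\Rep^0 V_L$ via the induction functor $\ind : \CCC^0 \to \Rep^0 V_L$ of Proposition \ref{C0induction}. I will (i) determine which Fock spaces belong to $\CCC^0$, (ii) verify that their inductions are simple and pairwise non-isomorphic exactly according to cosets in $L^*/L$, and (iii) show that every simple local module arises this way.

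For $y \in \RR$, the underlying object is $\ind(\F_y) = V_L \otimes_{\dCCC} \F_y = \bigoplus_{n \in \ZZ} \F_{\sqrt{2N}n + y}$. Componentwise, the double braiding of $\F_{\sqrt{2N}n} \subset V_L$ with $\F_y$ is the scalar $e^{2\pi i \sqrt{2N}ny}$, so $\F_y \in \CCC^0$ iff $\sqrt{2N}ny \in \ZZ$ for every $n \in \ZZ$, i.e., iff $y \in L^* = \tfrac{1}{\sqrt{2N}}\ZZ$. For such $y$, Frobenius reciprocity (from \cite{CKM}) gives
\[ \End_{\Rep^0 V_L}\bigl(\ind(\F_y)\bigr) \;\cong\; \Hom_{\dCCC}\bigl(\F_y,\,V_L \otimes \F_y\bigr) \;\cong\; \bigoplus_{n \in \ZZ}\Hom_{\CCC}\bigl(\F_y,\F_{\sqrt{2N}n+y}\bigr) \;\cong\; \CC, \]
where only $n=0$ contributes since $\CCC$ is skeletal. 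Semisimplicity of $\Rep^0 V_L$ (inherited from $\dCCC$) then upgrades the one-dimensional endomorphism algebra to simplicity of $\ind(\F_y)$. Two induced modules $\ind(\F_y)$, $\ind(\F_{y'})$ can be isomorphic as $V_L$-modules only if their underlying objects in $\dCCC$ agree, which forces the multisets $\{\sqrt{2N}n+y\}_n$ and $\{\sqrt{2N}n+y'\}_n$ to coincide, i.e., $y \equiv y' \bmod L$. Hence $\{\ind(\F_{x/\sqrt{2N}})\}_{x=0}^{2N-1}$ produce $2N$ pairwise non-isomorphic simple objects of $\Rep^0 V_L$.

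It remains to show completeness. Let $M \in \Rep^0 V_L$ be simple and decompose $M = \bigoplus_i \F_{z_i}$ in $\dCCC$. Fix any summand $\F_z$ with inclusion $\iota: \F_z \hookrightarrow M$. Applying associativity of $\mu_M$ to $\F_{\mp\sqrt{2N}} \otimes \F_{\pm\sqrt{2N}} \otimes \F_z$, together with the unit axiom and the normalization $\mu_{\lambda_1,\lambda_2}=1$ of \eqref{multiplicationlattice}, shows that the components $\mu_M|_{\F_{\pm\sqrt{2N}} \otimes \F_z}$ are nonzero; iterating with all powers of $\F_{\pm\sqrt{2N}}$ shows that $\F_{\sqrt{2N}n+z}$ is a summand of $M$ with nonzero component of $\mu_M$ for every $n \in \ZZ$. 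The locality condition of Definition \ref{Rep0def}, applied componentwise, then forces $e^{2\pi i \sqrt{2N}nz}=1$ for all $n$, so $z \in L^*$. Finally, by Frobenius reciprocity, $\iota$ lifts to a nonzero $V_L$-module map $\ind(\F_z) \to M$, which is surjective by simplicity of $M$ and injective by simplicity of $\ind(\F_z)$; hence $M \cong \ind(\F_z) \cong \ind(\F_{x/\sqrt{2N}})$ for the unique $x \in \{0,\dots,2N-1\}$ with $z \equiv x/\sqrt{2N}\bmod L$. The most delicate point I anticipate is this propagation argument forcing $z \in L^*$: one must carefully chain associativity with the invertibility of $\F_{\pm\sqrt{2N}}$ in $\CCC$ to populate every integer shift of $z$ as a genuine summand of $M$ with nontrivial $V_L$-action, which is the only way to squeeze the locality condition into a quantization of $z$.
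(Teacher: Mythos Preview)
Your proof is correct and more self-contained than the paper's, which outsources the two heaviest steps to external references. The paper invokes \cite[Theorem~4.5]{CKM} to assert that every simple in $\Rep^0 V_L$ is induced from a simple in $\dCCC$, and \cite[Theorem~3.15]{CKL} to reduce the locality condition to the single monodromy equation $M_{J,\F_\lambda}=\Id$, which it then evaluates via the balancing axiom rather than by direct computation of the double braiding as you do. For the coset identification, the paper only proves the \emph{isomorphism} direction, exhibiting an explicit shift map $\shift^\ell:\ind(\F_x)\to\ind(\F_{x+\ell})$ and checking by hand that it intertwines the $V_L$-action; simplicity of the inductions and non-isomorphism across distinct cosets are left implicit in the cited theorems. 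Your route instead extracts both simplicity and the coset parametrization from Frobenius reciprocity and the underlying-object comparison, and replaces the black-box ``every simple is induced'' by the propagation argument using associativity and the unit axiom to force any summand $\F_z$ of a simple local module to satisfy $z\in L^*$. That last step is genuinely the delicate point, as you flag, but it goes through exactly as you sketch because $\CCC$ is skeletal: the nonzero component $\mu_M|_{\F_{\sqrt{2N}}\otimes\F_z}$ can only land in a summand isomorphic to $\F_{\sqrt{2N}+z}$, so that summand must be present. One small point worth making explicit: the final isomorphism $\ind(\F_z)\cong\ind(\F_{x/\sqrt{2N}})$ follows by rerunning your Frobenius-reciprocity argument with the summand $\F_{x/\sqrt{2N}}\subset M$ in place of $\F_z$, which your propagation step guarantees exists.
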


\begin{proof}
By \cite[Theorem 4.5]{CKM}, every simple object is induced by a simple object. Therefore, it is enough to determine which $\F_{\lambda}$ induce to $\Rep^0V_L$. By semisimplicity of $\dCCC$, we can consider only its simple objects which coincide with the simple objects in $\C$. By \cite[Theorem 3.15]{CKL}, for any simple $\F_{\lambda} \in \C_{\oplus}$, $\mathscr{F}(\F_{\lambda}) \in \Rep^0V_L$ if and only if $M_{J,\F_{\lambda}} = \Id_{J \otimes \F_{\lambda}}$ where $M_{A,B}=c_{B,A} \circ c_{A,B}$ is the monodromy and $J=\F_{\sqrt{2N}}$ is the simple current from which $V_L$ is built. Recall that the twist on $\dCCC$ coincides with the twist on $\C$ for the $\F_{\lambda}$ and is given by $\theta_{\F_{\lambda}}=e^{\pi i \lambda^2} \Id_{\F_{\lambda}}$. By the balancing axioms we see that
\begin{align*}
\theta_{\F_{\sqrt{2N}} \otimes \F_{\lambda}} &= M_{\F_{\sqrt{2N}},\F_{\lambda}} \circ (\theta_{F_{\sqrt{2N}}} \otimes \theta_{\F_{\lambda}}) \\
&=M_{\F_{\sqrt{2N}},\F_{\lambda}} \circ e^{2 \pi i N} e^{\pi i \lambda^2} \Id_{\F_{\sqrt{2N}} \otimes\F_{\lambda} }\\
&=e^{\pi i \lambda^2} M_{\F_{\sqrt{2N}},\F_{\lambda}}.
\end{align*}

Since $\F_{\sqrt{2N}} \otimes \F_{\lambda} = \F_{\lambda + \sqrt{2N}}$, we see that
\begin{align*}
\theta_{\F_{\sqrt{2N}} \otimes \F_{\lambda}} &= e^{\pi i (\sqrt{2N} + \lambda)^2} \Id_{\F_{\sqrt{2N}} \otimes \F_{\lambda}}\\
&=e^{\pi i (2\lambda \sqrt{2N} + \lambda^2)} \Id_{\F_{\sqrt{2N}} \otimes \F_{\lambda}}.
\end{align*}

Hence, $M_{\F_{\sqrt{2N}},\F_{\lambda}} = e^{\pi i \lambda \sqrt{2N}} \Id_{\F_{\sqrt{2N}} \otimes \F_{\lambda}}$, so $M_{\F_{\sqrt{2N}},\F_{\lambda}} = \Id_{\F_{\sqrt{2N}} \otimes \F_{\lambda}}$ $\; \Leftrightarrow \;$ $\lambda \sqrt{2N} \in \mathbb{Z}$, that is, if and only if $\lambda$ is in the dual lattice $L^*=\frac{1}{\sqrt{2N}}\mathbb{Z}$ of $L=\sqrt{2N}\mathbb{Z}$. Fix $x \in L^*$ and $\ell \in L$. It remains to be shown that
\[ \ind(\F_{x})= \bigoplus\limits_{\lambda_1 \in L} \F_{\lambda_1} \otimes \F_{x} \text{ \quad and \quad } \ind(\F_{x+\ell})=\bigoplus\limits_{\lambda_2 \in L} \F_{\lambda_2} \otimes \F_{x+\ell} \]
are isomorphic as objects of $\Rep^0 V_L$. Define a morphism
\begin{equation} \label{shift}
\shift^\ell = \Big(\big\{(\lambda_1\big\} \mapsto \{\lambda_1 - \ell\}, \{1\}_{\lambda_1 \in L}\Big) \in \Hom_{\dCCC}\big(\ind(\F_x),\ind(\F_{x+\ell})\big) \; .
\end{equation}
To show that $\shift^\ell$ intertwines the $V_L$-actions on these two induced modules (see Definition \ref{indfunctordef}), recall that $\dCCC$ has trivial associativity (because $\C$ has trivial associativity). Therefore, the actions are simply given by \eqref{multiplicationlattice} where $\mu_{\lambda_1,\lambda_2} = 1$ for all $\lambda_1,\lambda_2 \in L$. It follows that 
\begin{align*}
\shift^\ell \, \circ \; \mu = \Big(\big\{(\ell_A,\lambda_1)\big\} \mapsto \{\ell_A + \lambda_1 - \ell\}, \{1 \cdot 1\}_{(\ell_A,\lambda_1) \in L^2}\Big)  = \mu \, \circ \shift^\ell \, ,
\end{align*}

in $\Hom_{\dCCC}\big(V_L \otimes_{\dCCC} \ind(\F_x), \ind(\F_{x + \ell})\big)$. Obviously, $\shift^\ell$ is invertible with inverse $\shift^{-\ell}$ and we conclude that $\ind(\F_x) \cong \ind(\F_{x+\ell})$ in $\Rep^0 V_L$. 
\end{proof}

To recover the associativity, braiding and twist scalars for the $V_L$ within the framework of algebra objects, one has to explicit the tensor product $\otimes_{V_L}$ (see Definition \ref{tensorAdef}) of between tuples of simple objects as of Proposition \ref{simpleobjectslatticealgebra}. Let $x, y \in L^*$ and consider the tensor product
\begin{equation} \label{sampleprelatticetensorprod}
\ind(\F_x) \otimes_{\dCCC} \ind(\F_y) = \bigoplus_{(\lambda_1,\lambda_2) \in L^2} \F_{\lambda_1} \otimes \F_{x} \otimes \F_{\lambda_2} \otimes \F_{y} \, .
\end{equation}
The left and right multiplication maps $\ind(F_x) \otimes_{\dCCC} V_L \otimes_{\dCCC} \ind(F_y) \rightarrow \ind(F_x) \otimes_{\dCCC} \ind(F_y)$ defining the tensor product $\otimes_{V_L}$ (see Definition \ref{tensorAdef}) are:
\begin{align}
m^{\text{left}} &= \left(\big\{(\lambda_1,\ell_A,\lambda_2)\big\} \mapsto \big\{(\lambda_1 + \ell_A,\lambda_2)\big\},\left\{e^{\pi i \big((\lambda_1 + x) \cdot \ell_A\big)}\right\}_{(\lambda_1,\ell_A,\lambda_2) \in L^3} \right) , \label{leftAaction} \\
m^{\text{right}} &=  \left(\big\{(\lambda_1,\ell_A,\lambda_2)\big\} \mapsto \big\{(\lambda_1,\lambda_2 + \ell_A)\big\},\left\{1\right\}_{(\lambda_1,\ell_A,\lambda_2) \in L^3} \right) . \label{rightAaction}
\end{align}
Let $\lambda_1, \lambda_2, \tilde{\lambda}_1, \tilde{\lambda_2} \in L$ be such that $\lambda_1 + \lambda_2 = \tilde{\lambda}_1 + \tilde{\lambda}_2$ and set $\ell_A = \lambda_1 - \tilde{\lambda}_1 = \tilde{\lambda}_2 - \lambda_2$. The formulas~\eqref{leftAaction} and~\eqref{rightAaction} mean that in the quotient space $\ind(\F_x) \otimes_{V_L} \ind(\F_y)$, the following operations on the components of $\ind(\F_x) \otimes_{\dCCC} \ind(\F_y)$ given in~\eqref{sampleprelatticetensorprod} are the same:
\vspace{-0.5em}
\begin{equation} \label{linkcomponenttensA}
\begin{array}{l}
 \text{$\bullet$ multiplying $\F_{\lambda_1} \otimes \F_x \otimes \F_{\lambda_2} \otimes \F_y$ by $e^{\pi i \big((\tilde{\lambda}_1 + x) \cdot (\lambda_1-\tilde{\lambda}_1)\big)}$} ; \\
\text{$\bullet$ multiplying $\F_{\tilde{\lambda}_1} \otimes \F_x \otimes \F_{\tilde{\lambda}_2} \otimes \F_y$ by $1$}. \\
\end{array}
\end{equation}
It follows that the components $\F_{\lambda_1} \otimes \F_x \otimes \F_{\lambda_2} \otimes \F_y$ and $\F_{\tilde{\lambda}_1} \otimes \F_x \otimes \F_{\tilde{\lambda}_2} \otimes \F_y$ are redundant in $\ind(\F_x) \otimes_{V_L} \ind(\F_y)$. Consequently, we expect:
\begin{align}
\ind(\F_x) \otimes_{V_L} \ind(\F_y) &= \bigoplus_{(\lambda_1,\lambda_2) \in \frac{L^2}{\ker(+)}} \left(\F_{\lambda_1} \otimes \F_x \otimes \F_{\lambda_2} \otimes \F_y\right) \notag \\
&\cong \bigoplus_{t \in L} \left(\F_t \otimes \F_x \otimes \F_y\right)  \label{isolatticetensorprod} \\
&= \bigoplus_{t \in L} \left(\F_t \otimes \F_{x+y}\right) \notag \\
&= \ind\left(\F_{x+y}\right), \notag
\end{align}
where an isomorphism of $\Rep^0 V_L$ is needed at line~\eqref{isolatticetensorprod}. The next lemma addresses this matter:

\begin{lemma} \label{fmap} The map of \eqref{isolatticetensorprod} can be defined as follows:
\begin{equation*} 
f^{x,y} = \left(\big\{(\lambda_1,\lambda_2)\big\} \mapsto \{\lambda_1 + \lambda_2\}, \{e^{\pi i \, x \lambda_2}\}_{(\lambda_1,\lambda_2) \in \frac{L^2}{\ker(+)} = L}\right) 
\end{equation*}

In particular, it is a well defined isomorphism in $\Rep^0V_L$ between $\ind(\F_x) \otimes_{V_L} \ind(\F_y)$ and $\ind(\F_{x+y})$. Note that it corresponds to the map $\ind \circ \boxtimes \rightarrow \boxtimes_{V_L} \circ (\ind \times \ind)$ given in \cite[Theorem 2.59 (2)]{CKM}.
\end{lemma}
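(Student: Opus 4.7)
The plan is to first lift $f^{x,y}$ to a morphism $\tilde{f}^{x,y} \in \Hom_{\dCCC}(\ind(\F_x) \otimes_{\dCCC} \ind(\F_y), \ind(\F_{x+y}))$ with the same formula (set-map given by addition, scalar $e^{\pi i x \lambda_2}$), then show in three steps that $\tilde{f}^{x,y}$ descends through the quotient defining $\otimes_{V_L}$ to a $V_L$-module isomorphism.

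\emph{Step 1 (well-definedness).} I verify $\tilde{f}^{x,y} \circ m^{\mathrm{left}} = \tilde{f}^{x,y} \circ m^{\mathrm{right}}$ componentwise. Using \eqref{leftAaction} and \eqref{rightAaction}, both composites send $(\lambda_1, \ell_A, \lambda_2) \in L^3$ to the target component $\lambda_1 + \ell_A + \lambda_2 \in L$ of $\ind(\F_{x+y})$, with scalars $e^{\pi i (\lambda_1 + x) \ell_A}\cdot e^{\pi i x \lambda_2}$ and $e^{\pi i x (\lambda_2 + \ell_A)}$ respectively. Their equality reduces to $e^{\pi i \lambda_1 \ell_A} = 1$. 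This is where the even-lattice condition $L = \sqrt{2N}\ZZ$ enters essentially: $\lambda_1 \ell_A \in 2N\ZZ$, so the identity holds and $\tilde{f}^{x,y}$ descends to $f^{x,y}: \ind(\F_x) \otimes_{V_L} \ind(\F_y) \to \ind(\F_{x+y})$.

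\emph{Step 2 (module intertwining).} By Definition~\ref{tensorAdef} and the triviality of associativity in $\dCCC$, the $V_L$-action on $\ind(\F_x) \otimes_{V_L} \ind(\F_y)$ acts on the left factor with trivial scalar (since $\mu_{\ell_A, \lambda_1} = 1$). A direct check shows both orders of composing this action with $f^{x,y}$ land in the component $\lambda_1 + \ell_A + \lambda_2$ with the same scalar $e^{\pi i x \lambda_2}$, so $f^{x,y}$ is a $V_L$-module morphism.

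\emph{Step 3 (isomorphism).} Choosing the canonical representative $(t,0)$ for each class in $L^2/\ker(+) \cong L$, the map $f^{x,y}$ specialises to the identity on each source component $\F_{t+x+y}$ (the scalar is $e^{\pi i x \cdot 0} = 1$), so it is a direct sum of identity morphisms, hence an isomorphism in $\dCCC$ and therefore in $\Rep^0 V_L$. The principal obstacle is Step 1: without the even-lattice hypothesis $e^{\pi i \lambda_1 \ell_A}$ would generally be a nontrivial root of unity, breaking descent to the quotient and explaining why the construction requires $L$ to be even. The final assertion identifying $f^{x,y}$ with the canonical map of \cite[Theorem 2.59 (2)]{CKM} is then automatic, as both are nonzero $V_L$-module morphisms between simple objects and must coincide up to a scalar pinned down by the monoidal structure of $\ind$.
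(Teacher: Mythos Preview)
Your proof is correct and follows essentially the same route as the paper's: both verify well-definedness by reducing to $e^{\pi i \lambda_1 \ell_A}=1$ via the even-lattice condition, check the $V_L$-intertwining property by the same componentwise scalar computation, and conclude bijectivity from the bijection $L^2/\ker(+)\cong L$ together with the non-vanishing of the component scalars. Your phrasing of Step~1 as the equality $\tilde{f}^{x,y}\circ m^{\mathrm{left}}=\tilde{f}^{x,y}\circ m^{\mathrm{right}}$ is a slightly cleaner packaging of the paper's representative-by-representative check, and your Step~3 choice of the section $t\mapsto(t,0)$ is a concrete instance of the paper's more abstract ``non-zero scalars plus index bijection'' argument.
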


\begin{proof} Following \cite[Proposition 5.7]{DF}, this is rather direct. However, the discussion leading to line~\eqref{isolatticetensorprod} gives a more conceptual approach. On a component $\F_{\lambda_1} \otimes \F_x \otimes \F_{\lambda_2} \otimes \F_{y}$ of $\ind(\F_x) \otimes_{V_L} \ind(\F_y)$, $f$ just braids $\F_x$ with $\F_{\lambda_2}$ and multiplies $\F_{\lambda_1} \otimes \F_{\lambda_2}$. To show that $f$ is a well-defined map, we must compare its effect on two components:
$$ \F_{\lambda_1} \otimes \F_x \otimes \F_{\lambda_2} \otimes \F_y \qquad \text{ and } \qquad \F_{\tilde{\lambda}_1} \otimes \F_x \otimes \F_{\tilde{\lambda}_2} \otimes \F_x $$
where $\lambda_1 + \lambda_2 = \tilde{\lambda}_1 + \tilde{\lambda}_2$.  As the above components are subject to the equivalence of line~\eqref{linkcomponenttensA}, the map $f^{x,y}$ is well defined if and only if
\begin{eqnarray*}
 & & f^{x,y}_{(\lambda_1,\lambda_2)} \cdot e^{\pi i \big((\tilde{\lambda}_1 + x) \cdot (\lambda_1-\tilde{\lambda}_1)\big)} = f^{x,y}_{(\tilde{\lambda}_1,\tilde{\lambda}_2)} \cdot 1 \\
 \Longleftrightarrow & & e^{\pi i (x \cdot \lambda_2)} \; = \; e^{\pi i (x \cdot \tilde{\lambda}_2)} \; e^{\pi i \big((\tilde{\lambda}_1 + x) \cdot (\lambda_1-\tilde{\lambda}_1)\big)} \\
 \Longleftrightarrow & & 1 = e^{\pi i \big(\tilde{\lambda}_1 \cdot (\lambda_1 - \tilde{\lambda})\big)} \; ,
\end{eqnarray*}
which holds since $\lambda_1,\tilde{\lambda}_1 \in L = \sqrt{2N}\ZZ$. It remains to be shown that $f^{x,y}$ is a morphism in $\Rep^0 V_L$ and that it is both injective and surjective. The morphism $f^{x,y}$ is in $\Rep^0 V_L$ since
\begin{equation*}
f^{x,y} \circ \mu = \left(\Big\{\big(\ell_A,(\lambda_1,\lambda_2)\big)\Big\} \mapsto \big\{\ell_A + \lambda_1 + \lambda_2\big\}, \{e^{\pi i \, x \lambda_2}\}_{\big(\ell_A,(\lambda_1,\lambda_2)\big) \in L \times \frac{L^2}{\ker(+)}}\right) = \mu \circ f^{x,y}
\end{equation*} 
Finally, the injectivity and surjectivity of $f^{x,y}$ follow from the facts that the component maps of $f^{x,y}$ are isomorphisms at the level of Fock spaces (non-zero scalars) and that
\begin{align*}
+ : \;\; L^2/\ker(+) \quad &\stackrel{1:1}{\longleftrightarrow} \quad L 
\end{align*}
is a natural bijection between the index sets of the domain \& codomain of $f$.
\end{proof}

We conclude by the following:

\begin{theorem}
The skeleton of $\text{Rep}^0 V_L$ with simple objects given in \eqref{simpleobjectslatticealgebra} is a rigid monoidal category with tensor product
\begin{equation*} 
\ind(\F_x) \otimes_{V_L} \ind(\F_y) = \ind(\F_{x+y-k(x,y)}).
\end{equation*}

where $k:L^*/L \times L^*/L \to L$ is defined in \eqref{cocyclesimplecurr}. This category has associativity constraint, braiding, and twist given by
\begin{align*}
a^{V_L}_{\ind(\F_x),\ind(\F_y),\ind(\F_z)} &= (-1)^{x \cdot k(y,z)} \Id_{\ind(\F_{x+y+z-k(x,y)-k(x+y,z)})} \\
c_{\ind(\F_x),\ind( \F_y)}^{V_L} &= e^{\pi i xy}\Id_{\ind(\F_{x+y-k(x,y)})}\\
\theta^{V_L}_{\ind(\F_x)} &= e^{\pi i x^2} \Id_{\ind(\F_x)}
\end{align*}

Notice that the above associativity, braiding and twist scalars match their respective analogues for $V_L$ seen as a VOA (see subsection 4.1).

\end{theorem}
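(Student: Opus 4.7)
The plan is to transport each piece of structure from $\dCCC$ to $\Rep^0 V_L$ via the induction functor $\ind$ (a braided tensor functor on $\C^0$ by Proposition~\ref{C0induction}), the isomorphism $f^{x,y}$ of Lemma~\ref{fmap}, and the shift isomorphisms of Proposition~\ref{simpleobjectslatticealgebra}, then express the result on the chosen skeletal representative $\ind(\F_{s(\overline{x+y})}) = \ind(\F_{x+y-k(x,y)})$. The tensor product formula is then immediate: $\ind(\F_x) \otimes_{V_L} \ind(\F_y) \cong \ind(\F_{x+y})$ by Lemma~\ref{fmap}, and composing with $\shift^{-k(x,y)}$ lands on the skeletal model $\ind(\F_{x+y-k(x,y)})$. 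For rigidity, $\F_{-x}$ is already a left dual of $\F_x$ in $\dCCC$, so its induction (adjusted by a shift to $\ind(\F_{(2N-a)/\sqrt{2N}})$ when $x = a/\sqrt{2N}$) supplies the dual in $\Rep^0 V_L$, with (co)evaluation morphisms inherited from $\dCCC$ through $\ind$.

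For the twist, the twist of $\dCCC$ restricts on each summand $\F_{\lambda+x}$ of $\ind(\F_x) = \bigoplus_{\lambda \in L} \F_{\lambda+x}$ as $e^{\pi i(\lambda+x)^2}\Id$. Writing $\lambda = \sqrt{2N}k$ and $x = m/\sqrt{2N}$, one has $(\lambda+x)^2 = 2Nk^2 + 2km + x^2$ with $2Nk^2 + 2km$ even, so the scalar reduces uniformly to $e^{\pi i x^2}$ on all components, giving a well-defined scalar twist on the simple object. For the braiding, Proposition~\ref{C0induction} ensures $\ind$ respects the braiding, and transferring $c^{\dCCC}_{\F_x,\F_y} = e^{\pi i xy}\Id$ through $f^{x,y}$ and the shift yields $c^{V_L}_{\ind(\F_x),\ind(\F_y)} = e^{\pi i xy}\Id$.

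The most delicate step is the associativity scalar. Since the associator of $\dCCC$ is trivial, the skeletal associator arises entirely from the discrepancy between the two iterated identifications
\[
(\ind(\F_x) \otimes_{V_L} \ind(\F_y)) \otimes_{V_L} \ind(\F_z) \;\xrightarrow{\sim}\; \ind(\F_{x+y+z-k(x,y)-k(x+y,z)})
\]
and
\[
\ind(\F_x) \otimes_{V_L} (\ind(\F_y) \otimes_{V_L} \ind(\F_z)) \;\xrightarrow{\sim}\; \ind(\F_{x+y+z-k(y,z)-k(x,y+z)}).
\]
The defining identity $k(\bar u,\bar v) = s(\bar u)+s(\bar v)-s(\overline{u+v})$ immediately gives the $2$-cocycle relation $k(x,y)+k(x+y,z) = k(y,z)+k(x,y+z)$, so both codomains coincide and the associator is a scalar endomorphism of a single simple object. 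Tracking the $e^{\pi i x \lambda_2}$ factor from $f^{x,y}$ and the $e^{\pi i (x+y-k(x,y))\lambda_3}$ versus $e^{\pi i x(\lambda_2+\lambda_3+k(y,z))}$ factors from the outer application of $f^{\cdot,\cdot}$ in the two bracketings, and using that $k(x,y)\lambda_3 \in 2N\ZZ$ for $\lambda_3 \in L$, the ratio reduces to $e^{\pi i x k(y,z)} = (-1)^{x\cdot k(y,z)}$; this is a well-defined sign because $x \in L^*$ and $k(y,z) \in L$ force $x\cdot k(y,z) \in \ZZ$.

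The main technical obstacle is precisely this scalar bookkeeping: the shift isomorphisms reindex the direct sum without introducing new scalars, but the scalars $e^{\pi i x \lambda_2}$ in $f^{x,y}$ depend on the reindexed parameter, so one must carefully align the indices across the two bracketings before comparing scalars. Once this alignment is in place, every remaining identity reduces to the already-known formulas in $\C$ together with elementary integrality facts about $L$ and $L^*$, and agreement of the four scalars with their VOA counterparts from Subsection~4.1 follows.
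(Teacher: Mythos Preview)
Your approach is essentially the same as the paper's: transport the monoidal, braiding, and twist data through the induction functor together with the isomorphisms $f^{x,y}$ and $\shift^{\ell}$, then read off scalars on the skeletal representatives. The tensor product, braiding, and twist computations are handled correctly; your direct argument for the twist (computing $e^{\pi i(\lambda+x)^2}=e^{\pi i x^2}$ componentwise) is a slightly more elementary variant of the paper's appeal to \cite[Corollary~2.89]{CKM}, and is fine since a scalar multiple of the identity automatically commutes with the $V_L$-action.

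There is, however, one bookkeeping slip in your associativity computation. In the right bracketing you correctly record the outer factor $e^{\pi i x(\lambda_2+\lambda_3+k(y,z))}$ coming from $f^{x,\,y+z-k(y,z)}$ applied after the shift, but you omit the \emph{inner} factor $e^{\pi i y\lambda_3}$ coming from $f^{y,z}$. Without it your exponent retains a stray $y\lambda_3$ term, which lies only in $\ZZ$ (not in $2\ZZ$) and does \emph{not} drop out. Including all four factors, the exponent of the ratio is
\[
x\lambda_2 + (x+y-k(x,y))\lambda_3 \;-\; y\lambda_3 \;-\; x(\lambda_2+\lambda_3+k(y,z)) \;=\; -k(x,y)\lambda_3 - x\,k(y,z),
\]
and \emph{now} $k(x,y)\lambda_3\in 2N\ZZ$ kills the first term, leaving $(-1)^{x\cdot k(y,z)}$ as claimed. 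This is exactly the computation the paper carries out (with the same simplification), so once you restore the missing $f^{y,z}$ contribution your argument matches the paper's proof.
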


\begin{proof} The associativity and braiding scalars are also computed in \cite[Proposition A.2]{DF} where $H = L^*$, $K = L$, $\alpha = k^\mu = 1$ and $\theta = k$. However we include the analogous computations in this proof in order to complete our alternate approach. Let $\dCCC^0$ be the full subcategory of objects of $\dCCC$ which induces to $\text{Rep}^0V_L$. By Proposition \ref{C0induction}, the induction functor $\ind : \mathcal{C}_{\oplus}^0 \rightarrow \text{Rep}^0V_L$ is a braided tensor functor and by Proposition \ref{simpleobjectslatticealgebra}, every simple object in $\text{Rep}^0V_L$ is induced by $\ind$.  The tensor product on simple modules is given by
\begin{equation} \label{detailslatticeskeletaltensorprod}
\tilde{f}: \; \ind(\F_x) \otimes_{V_L} \ind(\F_y) \stackrel{\eqref{isolatticetensorprod}}{\cong} \ind(\F_x \otimes \F_y) \stackrel{\eqref{shift}}{=} \ind(\F_{x+y-k(x,y)})
\end{equation}
for any $\ind(\F_x),\ind(\F_y)$ in the set of Proposition \ref{simpleobjectslatticealgebra}. Notice that the map $\tilde{f}$ is the map $f$ from Lemma \ref{fmap} accompanied by a shift in the index. The category is rigid  by Proposition 2.77 and Lemma 2.78 in \cite{CKM}. By Proposition 2.67 of the same reference, the braiding $c_{-,-}^{V_L}$ satisfies 
\[ \Id_{V_L} \otimes c_{\F_x,\F_y} = \ind(c_{\F_x,\F_y}) = \tilde{f}^{-1}_{\F_x,\F_y} \circ c_{\ind(\F_x),\ind(\F_y)}^{V_L} \circ \tilde{f}_{\F_x,\F_y}. \]

The braiding $c_{\F_x,\F_y}$ on $\mathcal{C}$ is given by $c_{\F_x,\F_y}=e^{ \pi i xy} \Id_{\F_x \otimes \F_y}$, so we see that
\[c_{\ind(\F_x),\ind(\F_y)}^{V_L}= e^{\pi i xy}\Id_{\ind(\F_{x+y}}). \]

Recall that $\theta_{\F_{\lambda}^k}=e^{\pi i k^2\lambda^2} \Id_{\F_{\lambda}^k} = e^{\pi i k^2 2N m^2} \Id_{\F_{\lambda}^k} = \Id_{\F_{\lambda}^k}$, so by \cite[Corollary 2.89]{CKM} 
\[ \theta_{\ind(\F_x)} = \ind(\theta_{\F_x}) = e^{\pi i x^2} \Id_{\ind(\F_x)}. \]

To compute the associativity, consider three objects $\ind(\F_x),\ind(\F_y),\ind(\F_z)$ and for each pair among them, the corresponding tensor product isomorphisms \ref{detailslatticeskeletaltensorprod}. By \cite[Theorem 2.59]{CKM}, the associativity map $a^{V_L}_{\ind(\F_x),\ind(\F_y),\ind(\F_z)}$ satisfies:
\begin{align*}
&a^{V_L}_{\ind(\F_x),\ind(\F_y),\ind(\F_z)} \circ (\Id_{\ind(\F_x)} \otimes_{V_L} \tilde{f}_{\ind(\F_y),\ind(\F_z)}) \circ \tilde{f}_{\ind(\F_x),\ind(\F_{y+z - k(y,z)})} \\
= ( &\tilde{f}_{\ind(\F_x),\ind(\F_y)} \otimes_{V_L} \Id_{\ind(\F_z)} ) \circ \tilde{f}_{\ind(\F_{x+y - k(x,y)}),\ind(\F_z)} \circ \ind(a_{\F_x,\F_y,\F_z}) 
\end{align*}
Let $m_{x,y}=e^{\pi i xy}$ be the scalar associated to the $f$ map of Lemma \ref{fmap}. Then, the associativity $a^{V_L}_{\ind(\F_x),\ind(\F_y),\ind(\F_z)}$ maps an arbitrary component as follows: 

\begin{align*}
(\F_{\lambda_1} \otimes \F_x \otimes \F_{\lambda_2} \otimes \F_y) \otimes \F_{\lambda_3} \otimes \F_z &\cong (\F_{\lambda_1 + \lambda_2} \otimes \F_{x+y-k(x,y)} ) \otimes \F_{\lambda_3} \otimes \F_{z} &  &m_{x,\lambda_2}\\
 & \cong \F_{\lambda_1 + \lambda_2 + \lambda_3} \otimes \F_{x+y+z-k(y,z)-k(x,y+z)}  & &m_{x+y-k(x,y),\lambda_3} \\
& = \F_{\lambda_1 + \lambda_2 + \lambda_3} \otimes \F_{x+y+z-k(x,y)-k(x+y,z)}& &\;\\
& \cong \F_{\lambda_1} \otimes \F_x  \otimes \F_{\lambda_2+\lambda_3} \otimes \F_{y+z-k(x,y)} & &m_{x,\lambda_2+\lambda_3}^{-1}\\
&\cong \F_{\lambda_1} \otimes \F_x \otimes (\F_{\lambda_2} \otimes \F_y \otimes \F_{\lambda_3} \otimes \F_z)& &m_{y,\lambda_3}^{-1}
\end{align*}

The corresponding scalar is equal to
\begin{align*} \label{scalarassoc}
e^{\pi i \big( x \lambda_2 + \big(x + y -k(x,y)\big)\lambda_3 - x\big(\lambda_2+\lambda_3-k(y,z)\big) - y \lambda_3 \big)} &= e^{\pi i \big(-k(x,y)\lambda_3 + x \cdot k(y,z)\big)} \\
&= e^{\pi i \, x \cdot k(y,z)} \quad \text{since } k(x,y),\lambda_3 \in L = \sqrt{2N} \ZZ \\
&= (-1)^{x \cdot k(y,z)} \quad \text{since } x \in L^* \text{ \& } k(y,z) \in L
\end{align*} 
As the scalar $(-1)^{x \cdot k(y,z)}$ is independent of $\lambda_1, \lambda_2, \lambda_3 \in L$, we conclude that 
$$ a^{V_L}_{\ind(\F_x),\ind(\F_y),\ind(\F_z)} = (-1)^{x \cdot k(y,z)} \Id_{\ind(\F_{x+y+z-k(x,y)-k(x+y,z)})} $$
as desired.\\
\end{proof}

\addcontentsline{toc}{section}{References}


\begin{thebibliography}{CGP00}

\bibitem[A]{A} D. Adamovi\'c, A construction of admissible {$A^{(1)}_1$}-modules of level {$-\frac 43$}, J. Pure Appl. Algebra vol.196 2-3 (2005), 119--134.


\bibitem[AKMPP]{AKMPP} D. Adamovi\'c, V. Kac, P. M\"oseneder Frajria, P. Papi, O. Per\v se, Finite vs. infinite decompositions in conformal embeddings, Comm. Math. Phys. vol. 348 2 (2016), 445--473.


\bibitem[AGV]{AGV} M. Artin, A. Grothendieck, J.L. Verdier, Th{\'e}orie des topos et cohomologie
{\'e}tale des sch{\'e}mas I, II, III, Lecture Notes in Mathematics, vol. 269, 270, 305, 
Springer, 1971.

\bibitem[ACR]{ACR} J. Auger, T. Creutzig, D. Ridout, Modularity of logarithmic parafermion vertex algebras, arXiv:1704.05168.

\bibitem[ACKR]{ACKR} J. Auger, T. Creutzig, S. Kanade, M. Rupert, Semisimplification of a Category of Modules for the Logarithmic $\mathcal{B}_p$-Algebras, in preparation.

\bibitem[C]{C} T. Creutzig, W-Algebras for Argyres-Douglas Theories, Eur. J. Math. 3 (2017), 659-690.

\bibitem[CGR]{CGR} T. Creutzig, A.M. Gainutdinov, I. Runkel, A quasi-Hopf algebra for the triplet vertex operator algebra, in preparation.



\bibitem[CKL]{CKL} T. Creutzig, S. Kanade, A. Linshaw,  Simple Current Extensions Beyond Semi-Simplicity, arXiv:1511.08754.

\bibitem[CKLR]{CKLR} T. Creutzig, S. Kanade, A. Linshaw, D. Ridout, Schur-Weyl Duality for Heisenberg Cosets, arXiv:1611.00305.

\bibitem[CKM]{CKM} T. Creutzig, S. Kanade, R. McRae, Tensor categories for vertex operator superalgebra extensions, arXiv:1705.05017.

\bibitem[CRW]{CRW} T.Creutzig, D. Ridout, S. Wood, Coset constructions of logarithmic $(1, p)$-models, Letters in Mathematical Physics 104 5 (2014), 553-583.


\bibitem[D]{D} C. Dong, Vertex Algebras Associated with Even Lattices, Journal of Algebra 161 1 (1993), 245-265.


\bibitem[DL]{DL} C. Dong, J. Lepowsky, Generalized vertex algebras and relative vertex operators, Progress in Mathematics, Birkh\"auser Boston, Inc., Boston, Vol. 112, 1993.

\bibitem[DLM]{DLM} C. Dong, H. Li, and G. Mason, Simple current extensions of vertex operator algebras, Comm. Math. Phys. 180 3 (1996), 671-707.

\bibitem[DF]{DF} A. Davydov, V. Futorny, Commutative Algebras in Drinfeld Categories of Abelian Lie Algebras, Proceedings of the Edinburgh Mathematical Society 55 (2012), 613-633.

\bibitem[EGNO]{EGNO} P. Etingof, S. Gelaki, D. Nikshych, V. Ostrik, Tensor Categories, American Mathematical Society, Mathematical Surverys and Monographs, Vol. 205, 2015.


\bibitem[FB]{BF} E. Frenkel, D. Ben-Zvi, Vertex Algebras and Algebraic Curves, American Mathematical Society, Mathematical Surveys and Monographs, Vol. 88, 2001.

\bibitem[FRS]{FRS} J. Fuchs, I. Runkel, C. Schweigert, TFT construction of RCFT correlators III: Simple Currents, Nucl.Phys. B694 (2004), 277-353.

\bibitem[HKL]{HKL} Y.-Z. Huang, A. Kirillov Jr. J. Lepowsky, Braided tensor categories and extensions of vertex operator algebras.
Comm. Math. Phys. 337 3 (2015), 1143-1159.

\bibitem[HL]{HL} Y.-Z. Huang, J. Lepowsky, Tensor products of modules for a vertex operator algebra and vertex tensor categories, Lie Theory and Geometry, in honor of Bertram Kostant, ed. R. Brylinski, J.-L. Brylinski, V. Guillemin, V. Kac, Birkh$\ddot{\text{a}}$user, Boston, 1994, 349-383.

\bibitem[HLZ1]{HLZ1} Y.-Z. Huang, J. Lepowsky and L. Zhang, Logarithmic tensor category theory for generalized
modules for a conformal vertex algebra, I: Introduction and strongly graded algebras and their
generalized modules, Conformal Field Theories and Tensor Categories, Proceedings of a Workshop
Held at Beijing International Center for Mathematics Research, ed. C. Bai, J. Fuchs, Y.-Z.
Huang, L. Kong, I. Runkel and C. Schweigert, Mathematical Lectures from Beijing University,
Vol. 2 (2014), 169-248.

\bibitem[HLZ2]{HLZ2} Y.-Z. Huang, J. Lepowsky and L. Zhang, Logarithmic tensor category theory for generalized
modules for a conformal vertex algebra, II: Logarithmic formal calculus and properties of logarithmic
intertwining operators, arXiv:1012.4196.

\bibitem[HLZ3]{HLZ3} Y.-Z. Huang, J. Lepowsky and L. Zhang, Logarithmic tensor category theory for generalized
modules for a conformal vertex algebra, III: Intertwining maps and tensor product bifunctors ,
arXiv:1012.4197.

\bibitem[HLZ4]{HLZ4} Y.-Z. Huang, J. Lepowsky and L. Zhang, Logarithmic tensor category theory for generalized
modules for a conformal vertex algebra, IV: Constructions of tensor product bifunctors and the
compatibility conditions , arXiv:1012.4198.

\bibitem[HLZ5]{HLZ5} Y.-Z. Huang, J. Lepowsky and L. Zhang, Logarithmic tensor category theory for generalized
modules for a conformal vertex algebra, V: Convergence condition for intertwining maps and
the corresponding compatibility condition, arXiv:1012.4199.

\bibitem[HLZ6]{HLZ6} Y.-Z. Huang, J. Lepowsky and L. Zhang, Logarithmic tensor category theory for generalized
modules for a conformal vertex algebra, VI: Expansion condition, associativity of logarithmic
intertwining operators, and the associativity isomorphisms, arXiv:1012.4202.

\bibitem[HLZ7]{HLZ7} Y.-Z. Huang, J. Lepowsky and L. Zhang, Logarithmic tensor category theory for generalized
modules for a conformal vertex algebra, VII: Convergence and extension properties and applications
to expansion for intertwining maps, arXiv:1110.1929.

\bibitem[HLZ8]{HLZ8} Y.-Z. Huang, J. Lepowsky and L. Zhang, Logarithmic tensor category theory for generalized
modules for a conformal vertex algebra, VIII: Braided tensor category structure on categories
of generalized modules for a conformal vertex algebra, arXiv:1110.1931.

\bibitem[KMPX]{KMPX} V. Kac, P. M\"oseneder Frajria, P. Papi, F. Xu, Conformal embeddings and simple current extensions, Int. Math. Res. Not. IMRN 14 (2015), 5229-5288.

\bibitem[KO]{KO} A. Kirillov Jr., V. Ostrik, On a q-analogue of the McKay correspondence and the ADE classification of sl2 conformal
field theories. Adv. Math. 171 (2002), no. 2, 183-227. 

\bibitem[KW]{KW} V. Kac and M. Wakimoto, Integrable highest weight modules over affine superalgebras and Appell’s function, Comm. Math. Phys. 215 (2001) 631-682.

\bibitem[La]{La}  C. H. Lam, Induced modules for orbifold vertex operator algebras. J. Math. Soc. Japan 53 3 (2001), 541-557.

\bibitem[LaLaY]{LaLaY} C. H. Lam, N. Lam, H. Yamauchi, Extension of unitary Virasoro vertex operator algebra by a simple module. Int. Math. Res. Not. 11 (2003), 577–611.

\bibitem[LL]{LL} J. Lepowsky, H. Li, Introduction to Vertex Operator Algebras and Their Representations. Birkhäuser, Boston (2003).

\bibitem[P]{P} B. Pareigis, On Braiding and Dyslexia, Journal of Algebra, Vol. 171 2 (1995),  413-425.
 
\bibitem[PP]{PP} N. Popescu and L. Popescu, Theory of Categories, Editura Academiei, 1979.

\bibitem[Y]{Y} H. Yamauchi, Module categories of simple current extensions of vertex operator algebras, Journal of Pure and Applied Algebra Volume 189, Issues 1–3 (2004), 315-328.




\end{thebibliography}
 \end{document}